\newif\ifJOTAformat
\IfFileExists{svjour3.cls}{%
  \documentclass[smallextended,referee,envcountsect,envcountsame]{svjour3}
  \JOTAformattrue
}{%
  \documentclass[11pt,a4paper]{article}
  \JOTAformatfalse
}
\usepackage[T1]{fontenc}
\usepackage{lmodern}
\usepackage{amsmath,amssymb,mathtools}
\ifJOTAformat
  \smartqed
\else
\usepackage[margin=28mm]{geometry}
\usepackage{amsthm}
\fi
\usepackage{booktabs,array}
\usepackage{xcolor}
\usepackage[colorlinks=true,linkcolor=blue,citecolor=blue,urlcolor=blue]{hyperref}
\usepackage{microtype}
\ifJOTAformat
\else
\newtheorem{theorem}{Theorem}[section]
\newtheorem{proposition}[theorem]{Proposition}
\newtheorem{lemma}[theorem]{Lemma}
\newtheorem{corollary}[theorem]{Corollary}
\newtheorem{example}[theorem]{Example}
\theoremstyle{definition}
\newtheorem{definition}[theorem]{Definition}
\newtheoremstyle{jotaremark}{\topsep}{\topsep}{\normalfont}{}
  {\itshape}{.}{.5em}{}
\theoremstyle{jotaremark}
\newtheorem{remark}[theorem]{Remark}
\fi
\newcommand{\R}{\mathbb R}
\newcommand{\norm}[1]{\left\|#1\right\|}
\newcommand{\ip}[2]{\langle #1,#2\rangle}
\newcommand{\cl}{\operatorname{cl}}
\newcommand{\sgn}{\operatorname{sign}}
\newcommand{\argmin}{\operatorname*{arg\,min}}
\newcommand{\gph}{\operatorname{gph}}
\newcommand{\supp}{\operatorname{supp}}

\title{A Variational Geometric Approach to Exact Recovery by
\texorpdfstring{$\ell_1/\ell_2$}{l1/l2} Minimization}
\author{Yulan Liu \quad and\quad Rongrong Lin\\ Guangdong University of Technology}
\date{}

\begin{document}
\maketitle
\begin{abstract}
For affine-constrained $\ell_1/\ell_2$ minimization with nonzero
data, a variational geometric approach is developed to characterize
exact recovery.
Necessary and sufficient conditions for local and global exact
recovery are established in terms of directional derivatives and
cones associated with sublevel sets of the ratio objective.
The local analysis further reveals that local
minimality, first-order growth, and tilt stability are equivalent. These pointwise
characterizations lead to improved sufficient conditions for uniform
recovery of sparse signals. The sufficient condition for
uniform local recovery has coefficient
$c_s=\max\{1,(\sqrt{s}-1)/2\}$ for $s\ge2$.
For each fixed $s$, the coefficient $c_s$ is optimal within the
considered family of local null-space conditions. Projection estimates yield
a scalar bound for uniform global recovery with coefficient two, improving the earlier
coefficient three. The coefficient two is optimal within the considered
family of global null-space bounds when both $s$ and $n$ may vary.
A positive margin in the local null-space condition provides explicit growth moduli, localization radii, and bounds on linear objective perturbations that leave the localized minimizer
fixed. Examples show that uniform local recovery does not imply
uniform global recovery.
\end{abstract}
\noindent\textbf{Keywords:} $\ell_1/\ell_2$ minimization; exact recovery;
variational analysis; null-space property; tilt stability.

\medskip
\noindent\textbf{Mathematics Subject Classification (2020):}
90C26; 90C31; 49J53; 94A12.

\section{Introduction}\label{Sec_intro}
Sparse recovery concerns the reconstruction of a sparse signal from
underdetermined linear measurements. A direct formulation minimizes
the $\ell_0$ objective, which counts the nonzero entries of a signal,
subject to the measurement constraints. The combinatorial nature of
this problem motivates the use of alternative sparsity-promoting
objectives. These include the convex $\ell_1$ norm \cite{FR},
the separable nonconvex $\ell_p^p$ penalty with $0<p<1$ \cite{FL},  the nonseparable difference $\ell_1-\ell_2$ \cite{YLHX}, and the \(\ell_1/\ell_2\) ratio \cite{JiaPrater2025,TaoZhangZhao2026}.
For a chosen objective, a fundamental question is to identify
conditions under which the original sparse signal is the unique
minimizer of the resulting optimization problem.

Given a matrix $A\in\R^{m\times n}$ and a vector
$b\in\R^m\setminus\{0\}$, this paper focuses on exact recovery for the
$\ell_1/\ell_2$ minimization problem
\begin{equation}\label{Eq_ratio}
 \min_{x\in\R^n}\ \Phi(x):=\frac{\norm{x}_1}{\norm{x}_2}
 \quad\text{subject to}\quad Ax=b.
\end{equation}
Throughout the paper, $\Delta:=\{x\in\R^n:Ax=b\}$ denotes the
feasible set of \eqref{Eq_ratio}, which is assumed to be nonempty.
The assumption $b\ne0$ ensures
that every feasible point is nonzero and hence that $\Phi$ is defined
on $\Delta$. When $b=0$, every nonzero feasible point $x$ has
other feasible points $tx$ with the same objective value, since
$A(tx)=0$ and $\Phi(tx)=\Phi(x)$ for $t>0$.
Taking $t\ne1$ arbitrarily close to $1$ shows that $x$ cannot be
the unique minimizer even in a neighborhood. Thus neither local
nor global exact recovery is possible for nonzero feasible signals
when $b=0$.
Here $\norm{\cdot}_1$ and $\norm{\cdot}_2$ are the
$\ell_1$ and Euclidean norms, respectively. The ratio satisfies
\[
 \Phi(tx)=\Phi(x)\quad(t\ne0),\qquad
 1\le\Phi(x)\le\sqrt{\norm{x}_0}\quad(x\ne0),
\]
where $\norm{x}_0$ is the number of nonzero entries of $x$.
The ratio is used to promote sparsity in recovery problems with
coherent measurement matrices \cite{YEX,RWDL} and in portfolio
optimization \cite{WSGAZ}. Its scale invariance makes the objective
independent of the magnitude of the signal.
Global exact recovery of a signal $x^*\in\Delta$ means that $x^*$
is the unique global minimizer of \eqref{Eq_ratio}. Local exact
recovery means that $x^*$ is the unique minimizer in some feasible
neighborhood.

For an integer $s\ge1$, a signal is called $s$-sparse if it has at
most $s$ nonzero entries. Uniform exact recovery of order $s$ means
that every $s$-sparse signal is uniquely recovered. For the ratio
model, this requirement is restricted to signals with nonzero
measurements. For $\ell_1$
minimization, this property is characterized by the null-space
property (NSP) \cite{GN,FR}. Recovery theory also extends to more
general separable penalties. Gribonval and Nielsen~\cite{GN07}
give sufficient conditions under which the same sparse representation
is uniquely recovered by every penalty in a class of admissible
separable sparsity penalties. For penalties of the form
$R(x)=\sum_i\varphi(|x_i|)$, where
$\varphi:[0,+\infty)\to[0,+\infty)$ is nondecreasing and concave
with $\varphi(0)=0$, a generalized NSP (gNSP) gives a necessary and
sufficient condition for uniform exact recovery \cite{TW}.
This condition compares the penalty contributions of a nonzero
null-space vector on a support and its complement. A complementary
approach derives sufficient recovery conditions from the restricted
isometry property (RIP). For $\ell_1$ minimization, RIP bounds also
give error estimates for noisy measurements and approximately sparse
signals \cite{CRT,FR}. For $\ell_p^p$ minimization with $0<p<1$,
RIP-based recovery guarantees and optimal RIP bounds have been
established in \cite{FL,ZL}.

For nonseparable penalties, interactions among coordinates require
recovery conditions beyond a direct extension of the separable case.
Yin et al.~\cite{YLHX} establish RIP-based exact and stable recovery
guarantees for constrained $\ell_1-\ell_2$ minimization.
Tran and Webster~\cite{TW} develop a more general framework for
symmetric nonseparable penalties concave on the nonnegative orthant.
Under additional assumptions on the penalty, the standard NSP
guarantees uniform exact recovery. Stronger assumptions allow a
relaxed NSP, with equal-height sparse signals excluded from the
recovery guarantee. Necessary and sufficient recovery conditions
have also been obtained for specific nonseparable models.
Bi and Tang~\cite{BT} characterize pointwise local, uniform local,
and uniform global exact recovery via $\ell_1-\ell_2$ minimization.
Xie et al.~\cite{XLL} extend the local characterizations to
$\ell_1-\alpha\ell_2$ minimization with $0<\alpha\le2$, both at a
fixed signal and uniformly over sparse signals. The ratio objective
in \eqref{Eq_ratio} is nonseparable and is not concave on the
nonnegative orthant, so it falls outside the concavity-based
framework of Tran and Webster.

Several recovery conditions have been established for
\eqref{Eq_ratio}. For nonnegative signals, Yin et al.~\cite{YEX}
give a sufficient condition for the unique sparsest feasible signal
to be the unique global minimizer of the ratio model. Their condition
involves the sparsity and amplitude uniformity of competing locally
sparse feasible solutions. For arbitrary signs,
Rahimi et al.~\cite{RWDL} prove local recovery under a strong
NSP. Xu et al.~\cite{XNTW} give sufficient conditions for local exact
recovery of a fixed signal and for uniform local and global exact
recovery. Their Remark~3.5 also discusses how the sufficient condition
for uniform local recovery can be weakened.
Tao and Zhang~\cite{TZ} prove that the support submatrix of every
local minimizer has full column rank. They also extend the nonnegative
global recovery result to signals of arbitrary signs, giving a
sufficient condition for recovery of the unique sparsest feasible
signal in terms of the sparsity and amplitude uniformity of competing
feasible solutions.

Recovery conditions have also been studied for more general norm-ratio
models. Zhou and Yu~\cite{ZY} establish sufficient conditions for
uniform exact recovery by $\ell_1/\ell_q$ minimization with
$1<q\le\infty$, including the case $q=2$.
Xie et al.~\cite{XSG} derive RIP-based sufficient recovery conditions
for $\ell_1/\ell_q$ minimization with $q>1$.
For $\ell_p^p/\ell_q^p$ minimization with $0<p\le1$ and $q>1$,
Yu and Huang~\cite{YH} establish sufficient conditions for local
and uniform global exact recovery, and also study noisy recovery and
numerical algorithms. Their model includes \eqref{Eq_ratio} as the
special case $p=1$ and $q=2$, so their recovery conditions provide a
direct point of comparison for the present study.

The existing analyses of the ratio model provide sufficient recovery
conditions and necessary structural properties of local minimizers.
However, a sufficient recovery condition does not, in general,
characterize whether a given signal is recovered. For local
recovery in \eqref{Eq_ratio},
a key issue is the behavior along feasible directions with zero
directional derivative, which cannot be determined by first-order
nonnegativity alone. For global recovery, comparisons over the
entire feasible set are required. These considerations motivate
the search for pointwise characterizations of local and global
exact recovery, which can in turn be used to derive improved
sufficient conditions for uniform recovery.

Variational geometry offers a natural approach to these questions.
Descent cones characterize unique recovery in convex optimization
\cite{ALMT}, and level-set geometry relates exact recovery to
objective growth in nuclear-norm optimization \cite{LPB}.
Motivated by these connections, this paper focuses on
\eqref{Eq_ratio} and develops pointwise recovery characterizations
through directional derivatives, the curvature along feasible rays,
and the geometry of sublevel sets. Relative to the broader norm-ratio
analysis in~\cite{YH}, the aim is to obtain exact local and global
recovery criteria for this specific objective and to determine their
implications for growth, stability, and uniform recovery.
The recovery conditions in~\cite{YH} are derived through norm
estimates and null-space inequalities. The present analysis first
establishes pointwise geometric characterizations and then derives
uniform sufficient conditions.

The main results of this paper are summarized as follows. First,
necessary and sufficient conditions for local exact recovery are
established in terms of directional derivatives and the tangent
cone to the reference sublevel set. A key property is that every
nonzero feasible direction with zero directional derivative has
strictly negative ray curvature. This yields the equivalence of
local minimality, first-order growth, and tilt stability.
The local recovery criterion can also be checked by
solving at most $2n$ linear programming feasibility problems, yielding
a polynomial-time test for a given signal when the matrix and signal
have rational entries.
For global exact recovery, an explicit description of the global
descent cone is obtained. Combined with the condition that this
cone intersects the null space of $A$ only at zero, this description
yields a pointwise recovery test.

These pointwise characterizations yield a sufficient condition for
uniform local recovery depending only on the null space and the
sparsity order. For $s\ge2$, its coefficient is
$c_s=\max\{1,(\sqrt{s}-1)/2\}$, improving
$(\sqrt{s}+3)/2$ in~\cite[Corollary 3.3]{YH} when $p=1$ and $q=2$.
The condition is also strictly weaker than the other local sufficient
conditions compared in this paper. For each fixed $s$, the coefficient
$c_s$ is optimal within the considered scalar family, independently
of the dimension and the measurement matrix. A positive margin in the
condition provides explicit growth moduli, localization radii, and
thresholds below which linear objective perturbations leave the
localized minimizer fixed.

For uniform global recovery, projection estimates yield a weaker
sufficient null-space condition than the compared condition of
Xu et al.~\cite{XNTW}. The resulting scalar bound improves the
coefficient three in~\cite[Theorem 4.1]{XNTW} and in the
$p=1$, $q=2$ case of~\cite[Theorem 4.3]{YH} to two. This coefficient
is optimal within the considered scalar family when both the sparsity
order and the dimension may vary. Examples establish that these
sufficient conditions are not necessary and that uniform local
recovery does not imply uniform global recovery.

The paper is organized as follows. Section~\ref{Sec_geometry}
develops the directional and sublevel-set geometry of $\Phi$ after
introducing notation and basic definitions.
Section~\ref{Sec_pointwise} establishes exact recovery and stability
criteria at a fixed signal. Section~\ref{Sec_uniform} derives uniform
recovery conditions, coefficient optimality results, and quantitative
stability bounds.

\section[Variational Geometry of the Ratio Objective]{Variational Geometry of the Ratio Objective}\label{Sec_geometry}
This section introduces notation and recalls basic concepts from
variational analysis. It then examines the directional behavior
and sublevel-set geometry of $\Phi$, which form the basis of the
recovery conditions developed in the subsequent sections.

The Euclidean inner product is denoted by $\ip\cdot\cdot$, and
the closed Euclidean unit ball by $\mathbb B$. For a matrix
$A\in\R^{m\times n}$, $\ker(A)$, $\operatorname{range}(A)$,
and $A^\top$ denote its null space, range, and transpose,
respectively. The orthogonal complement of a subspace $V\subseteq\R^n$
is denoted by $V^\perp$. For an index set $T\subseteq\{1,\ldots,n\}$,
write $T^c:=\{1,\ldots,n\}\setminus T$, and let $x_T$ and $A_T$
denote the subvector of $x$ and the column submatrix of $A$ indexed
by $T$, respectively. The support of $x\in\R^n$ is
$\supp(x):=\{i:x_i\ne0\}$. Throughout the paper, $S$ denotes
the support of the reference signal $x^*$, that is, $S:=\supp(x^*)$.
The sign function $\sgn(t)$ takes the values $1$, $0$, and $-1$
according as $t>0$, $t=0$, and $t<0$, respectively, and is applied
componentwise to vectors.
The gradient and Hessian of a smooth scalar function $g$ are
denoted by $\nabla g$ and $\nabla^2g$, respectively.
The one-sided directional derivative of $\Phi$ at $x$ in direction
$d$ is denoted by $\Phi'(x;d)$.

For a nonempty set $\Omega\subseteq\R^n$, $\cl\Omega$ denotes its
closure, $\operatorname{dist}(x,\Omega):=\inf_{y\in\Omega}\norm{x-y}_2$
denotes the distance from $x$ to $\Omega$, and $\delta_\Omega$
denotes its indicator function, equal to zero on $\Omega$ and
$+\infty$ elsewhere. The contingent tangent cone to $\Omega$ at
$\bar x\in\Omega$ is defined by \cite[Definition 6.1]{RW}
\[
 \mathcal T_\Omega(\bar x)
 :=\{d\in\R^n:\exists\,t_k\downarrow0,\ d_k\to d,
                   \ \bar x+t_kd_k\in\Omega\}.
\]
The regular and limiting normal cones to $\Omega$ at $\bar x$
are defined, respectively, by \cite[Definition 6.3]{RW}
\[
 \widehat{\mathcal N}_\Omega(\bar x)
 :=\left\{v\in\R^n:
 \limsup_{\substack{x\to\bar x,\ x\in\Omega\\x\ne\bar x}}
 \frac{\ip v{x-\bar x}}{\norm{x-\bar x}_2}\le0\right\}
\]
and
\[
 \mathcal N_\Omega(\bar x)
 :=\{v\in\R^n:\exists\,x_k\in\Omega,\ x_k\to\bar x,
       \ v_k\in\widehat{\mathcal N}_\Omega(x_k),\ v_k\to v\}.
\]
When $\Omega$ is convex, both cones coincide with the normal cone
of convex analysis \cite[Theorem 6.9]{RW}. A closed set $\Omega$
is said to be Clarke regular at $\bar x$ if
$\widehat{\mathcal N}_\Omega(\bar x)=\mathcal N_\Omega(\bar x)$
\cite[Definition 6.4]{RW}.

An extended-real-valued function $g:\R^n\to(-\infty,+\infty]$ is
proper if its domain $\operatorname{dom}g:=\{x:g(x)<+\infty\}$
is nonempty, and closed if it is lower semicontinuous.
For a proper closed function $g$ and $\bar x\in\operatorname{dom}g$,
the regular and limiting subdifferentials of $g$ at $\bar x$ are
defined, respectively, by \cite[Definition 8.3]{RW}
\[
 \widehat\partial g(\bar x)
 :=\left\{v\in\R^n:
 \liminf_{\substack{x\to\bar x\\x\ne\bar x}}
 \frac{g(x)-g(\bar x)-\ip v{x-\bar x}}{\norm{x-\bar x}_2}
 \ge0\right\}
\]
and
\[
 \partial g(\bar x)
 :=\{v\in\R^n:\exists\,x_k\xrightarrow{g}\bar x,
       \ v_k\in\widehat\partial g(x_k),\ v_k\to v\},
\]
where $x_k\xrightarrow{g}\bar x$ means that $x_k\to\bar x$ and
$g(x_k)\to g(\bar x)$. Both subdifferentials are empty outside
$\operatorname{dom}g$. If $g$ is continuously differentiable near
$\bar x$, both reduce to $\{\nabla g(\bar x)\}$
\cite[Exercise 8.8]{RW}. If $g$ is convex,
both coincide with its convex subdifferential
\cite[Proposition 8.12]{RW}.

For a closed set $\Omega$, the normal cones and subdifferentials
are related by
$\widehat{\mathcal N}_\Omega=\widehat\partial\delta_\Omega$ and
$\mathcal N_\Omega=\partial\delta_\Omega$
\cite[Exercise 8.14]{RW}. A proper closed function $g$ is said to
be subdifferentially regular at $\bar x\in\operatorname{dom}g$ if
its epigraph is Clarke regular at $(\bar x,g(\bar x))$
\cite[Definition 7.25]{RW}. The horizon subdifferential is denoted
by $\partial^\infty g$.
In particular, local Lipschitz continuity
of $g$ at $\bar x$ implies $\partial^\infty g(\bar x)=\{0\}$
\cite[Theorem 9.13]{RW}.

For a set-valued mapping $M$, its graph is denoted by
$\gph M:=\{(p,x):x\in M(p)\}$. The graphical derivative of $M$ at
$(\bar p,\bar x)\in\gph M$ is defined by
\[
 DM(\bar p\mid\bar x)(u)
 :=\{d:(u,d)\in\mathcal T_{\gph M}(\bar p,\bar x)\}.
\]

To formulate stationarity for \eqref{Eq_ratio}, define the
constrained objective $f(x):=\Phi(x)+\delta_\Delta(x)$ for $x\ne0$,
and set $f(0):=+\infty$.
\begin{definition}\label{Def_critical}
A point $x^*\in\Delta$ is called a \emph{critical point} of
\eqref{Eq_ratio} if $0\in\partial f(x^*)$.
\end{definition}

\subsection{Characterizations of Critical Points}\label{Sec_critical}

The following lemma combines the subdifferential formula for $\Phi$
in \cite[Eq.~(2.3)]{ZYP} with the normal cone to the affine set
$\Delta$ to obtain an explicit expression for $\partial f(x^*)$.

\begin{lemma}\label{Prop_subdiff}
For every $x^*\in\Delta$,
$\widehat{ \partial} f(x^*)=\partial f(x^*)=\partial\Phi(x^*)+\mathcal N_\Delta(x^*)$,
where
\begin{equation}\label{Eq_Phi}
 \widehat{\partial}\Phi(x^*)=\partial\Phi(x^*)=
 \left\{\frac{z}{\norm{x^*}_2}
 -\frac{\norm{x^*}_1x^*}{\norm{x^*}_2^3}:
 z\in\R^n,\ z_S=\sgn(x^*_S),\ |z_i|\le1\ (i\in S^c)\right\}
\end{equation}
and
$ \widehat{\mathcal N }_\Delta(x^*)=\mathcal N_\Delta(x^*)=(\ker(A))^\perp
 =\{A^\top\lambda:\lambda\in\R^m\}$.
\end{lemma}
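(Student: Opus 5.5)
The plan is to prove the statement in three layers: first the normal cone to $\Delta$, then the subdifferentials of $\Phi$, and finally the sum rule for $f=\Phi+\delta_\Delta$.

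\emph{Normal cone.} Since $\Delta=x^*+\ker(A)$ is a nonempty affine set, hence convex, \cite[Theorem 6.9]{RW} gives $\widehat{\mathcal N}_\Delta(x^*)=\mathcal N_\Delta(x^*)$, and both equal the convex normal cone. The convex normal cone to a translate of a subspace is the orthogonal complement of that subspace, namely $(\ker(A))^\perp$. Finally, $(\ker(A))^\perp=\operatorname{range}(A^\top)$ by the fundamental theorem of linear algebra.

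\emph{Subdifferential of $\Phi$.} Near $x^*\ne0$ write $\Phi=g\cdot h$ with $g=\norm{\cdot}_1$ and $h=1/\norm{\cdot}_2$.
\begin{itemize}
\item The function $h$ is $C^1$ near $x^*$ with $\nabla h(x^*)=-x^*/\norm{x^*}_2^3$, and $h(x^*)>0$.
\item The function $g$ is convex and finite, so $\partial g(x^*)=\{z: z_S=\sgn(x^*_S),\ |z_i|\le1\ (i\in S^c)\}$.
\end{itemize}
To handle both subdifferentials I would pass to the one-sided directional derivative. Since $g$ is convex and $h$ is smooth and positive, $\Phi$ is locally Lipschitz and directionally differentiable, with
\[
\Phi'(x^*;d)=h(x^*)g'(x^*;d)+g(x^*)\ip{\nabla h(x^*)}d=\max_{v\in C}\ip vd,
\]
where $C$ denotes the right-hand set in \eqref{Eq_Phi}. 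The set $C$ is compact and convex.

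\begin{itemize}
\item \emph{Regular subdifferential.} A first-order expansion of $g$ and $h$, using $g(x^*+d)=g(x^*)+g'(x^*;d)$ for small $d$ (the $\ell_1$ norm is piecewise linear), shows that $\Phi(x^*+d)=\Phi(x^*)+\Phi'(x^*;d)+o(\norm d_2)$ uniformly in the direction. Hence $\widehat\partial\Phi(x^*)=\{v:\ip vd\le\Phi'(x^*;d)\ \forall d\}=C$.
\item \emph{Limiting subdifferential.} For $x$ near $x^*$ we have $\supp(x)\supseteq S$, and on $S$ the signs of $x$ agree with those of $x^*$. Applying the same formula at such $x$, every element of $\widehat\partial\Phi(x)$ has the form $z/\norm x_2-\norm x_1x/\norm x_2^3$ with $z\in\partial g(x)\subseteq\partial g(x^*)$. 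Taking limits and using continuity of $h$, $\nabla h$ and $g$, together with closedness of $\partial g(x^*)$, gives $\partial\Phi(x^*)\subseteq C$.
\end{itemize}
Combined with $C=\widehat\partial\Phi(x^*)\subseteq\partial\Phi(x^*)$, this yields \eqref{Eq_Phi}. This agrees with \cite[Eq.~(2.3)]{ZYP}, which could alternatively be cited directly.

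\emph{Sum rule.} The inclusion $\widehat\partial\Phi(x^*)+\widehat{\mathcal N}_\Delta(x^*)\subseteq\widehat\partial f(x^*)$ always holds, by adding the liminf inequalities. Next, since $\Phi$ is locally Lipschitz near $x^*$, \cite[Theorem 9.13]{RW} gives $\partial^\infty\Phi(x^*)=\{0\}$, so the qualification condition of \cite[Corollary 10.9]{RW} holds. That corollary yields
\[
\partial f(x^*)\subseteq\partial\Phi(x^*)+\mathcal N_\Delta(x^*).
\]
Here $f$ agrees with $\Phi+\delta_\Delta$ near $x^*$, because $0\notin\Delta$.

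Combining these inclusions with $\widehat\partial f\subseteq\partial f$ and the equalities already proved for $\Phi$ and $\Delta$ closes the chain:
\[
\partial\Phi+\mathcal N_\Delta=\widehat\partial\Phi+\widehat{\mathcal N}_\Delta\subseteq\widehat\partial f\subseteq\partial f\subseteq\partial\Phi+\mathcal N_\Delta,
\]
so all of these sets are equal. The main obstacle is the regular-subdifferential step for $\Phi$, which needs the uniform first-order expansion. I expect this to be straightforward because $\norm\cdot_1$ is piecewise linear near $x^*$ on each ray and the quadratic remainder from $h$ is $o(\norm d_2)$.
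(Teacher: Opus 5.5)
Your proof is correct, but it follows a more self-contained route than the paper's.

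The paper does not derive the formula for $\partial\Phi(x^*)$. It cites \cite[Eq.~(2.3)]{ZYP} for both that formula and the subdifferential (epigraphical Clarke) regularity of $\Phi$ at $x^*\ne0$. It then applies the full regular-case sum rule of \cite[Corollary 10.9]{RW}: both summands are regular, so $f$ is regular and $\widehat\partial f(x^*)=\partial f(x^*)=\partial\Phi(x^*)+\mathcal N_\Delta(x^*)$.

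You instead prove $\widehat\partial\Phi(x^*)=C$ directly. You use the uniform expansion $\Phi(x^*+d)=\Phi(x^*)+\Phi'(x^*;d)+O(\norm{d}_2^2)$, which is exactly what the paper itself establishes in Lemma~\ref{Prop_derivative}, together with the support-function identity $\Phi'(x^*;\cdot)=\max_{v\in C}\ip{v}{\cdot}$. You obtain $\partial\Phi(x^*)\subseteq C$ from sign stability, i.e.\ $\partial\norm{x}_1\subseteq\partial\norm{x^*}_1$ for $x$ near $x^*$. Finally, you close the chain $\widehat\partial\Phi+\widehat{\mathcal N}_\Delta\subseteq\widehat\partial f\subseteq\partial f\subseteq\partial\Phi+\mathcal N_\Delta$, which needs only the inclusion half of Corollary~10.9 and the elementary regular-subgradient sum inclusion.

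What your route buys is that you never have to verify or cite regularity of the epigraph of $\Phi$. You only need $\widehat\partial\Phi(x^*)=\partial\Phi(x^*)$, which you prove by hand, and the equality $\widehat\partial f=\partial f$ comes out of the chain rather than from regularity of $f$. The paper's version is shorter, but it rests entirely on the external reference for the key property of $\Phi$.

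One minor technicality applies equally to both proofs. Corollary~10.9 is stated for proper lsc functions on all of $\R^n$. Your remark that $0\notin\Delta$ is what makes $f$ lsc at $0$. For $\Phi$ you need an lsc value at $0$ (e.g.\ $\Phi(0)=1$, since $\Phi\ge1$ elsewhere) before applying the corollary. This changes nothing near $x^*$.
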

\begin{proof}
By \cite[Eq.~(2.3)]{ZYP}, $\Phi$ is subdifferentially regular
at $x^*\ne0$, with
\[
 \partial\Phi(x^*)=
 \frac1{\norm{x^*}_2}\partial\norm{x^*}_1
 -\frac{\norm{x^*}_1}{\norm{x^*}_2^3}x^*.
\]
Substituting the standard $\ell_1$ subdifferential gives
\eqref{Eq_Phi}.

For the affine set $\Delta=x^*+\ker(A)$, the regular and limiting
normal cones coincide \cite[Theorem 6.9]{RW}. A vector $w$ belongs
to this cone if and only if $\ip wd\le0$ for all $d\in\ker(A)$.
Applying the inequality to $d$ and $-d$ gives $\ip wd=0$, so
$\mathcal N_\Delta(x^*)=(\ker(A))^\perp=\operatorname{range}(A^\top)$.

Finally, local Lipschitz continuity of $\Phi$ near $x^*$ implies
$\partial^\infty\Phi(x^*)=\{0\}$ \cite[Theorem 9.13]{RW},
so the qualification condition in \cite[Corollary 10.9]{RW}
is satisfied. Since both $\Phi$ and $\delta_\Delta$ are regular
at $x^*$, the sum rule gives $\widehat{ \partial} f(x^*)=\partial f(x^*)=\partial\Phi(x^*)+\mathcal N_\Delta(x^*)$.
\end{proof}

The next lemma gives the directional derivative and a local
expansion of $\Phi$.
\begin{lemma}\label{Prop_derivative}
Let $x^*\ne0$. For every $d\in\R^n$, the directional derivative
of $\Phi$ at $x^*$ is
\begin{equation}\label{Eq_derivative}
\begin{aligned}
 \Phi'(x^*;d)
 &:=\lim_{t\downarrow0}\frac{\Phi(x^*+td)-\Phi(x^*)}{t}\\
 &=\frac{\ip{\sgn(x^*_S)}{d_S}+\norm{d_{S^c}}_1}{\norm{x^*}_2}
 -\frac{\norm{x^*}_1\ip{x^*}{d}}{\norm{x^*}_2^3}.
\end{aligned}
\end{equation}
As $d\to0$,
\begin{equation}\label{Eq_expansion}
 \Phi(x^*+d)=\Phi(x^*)+\Phi'(x^*;d)+O(\norm{d}_2^2).
\end{equation}
Here $O(\norm d_2^2)$ denotes a remainder bounded in absolute value
by a constant multiple of $\norm d_2^2$ as $d\to0$.
The map $d\mapsto\Phi'(x^*;d)$ is continuous, convex, and positively
homogeneous.
\end{lemma}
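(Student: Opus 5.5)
The plan is to write $\Phi=N/D$ with $N(x):=\norm{x}_1$ and $D(x):=\norm{x}_2$, and to expand numerator and denominator separately near $x^*\ne0$.

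\emph{The numerator.} The $\ell_1$ norm is piecewise linear. For $\norm{d}_2<\min_{i\in S}|x^*_i|$ the signs on $S$ are preserved, so
\[
 N(x^*+d)=N(x^*)+\ip{\sgn(x^*_S)}{d_S}+\norm{d_{S^c}}_1
\]
holds exactly, with no remainder. Denote the linear-plus-norm term by $N'(d):=\ip{\sgn(x^*_S)}{d_S}+\norm{d_{S^c}}_1$.

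\emph{The denominator.} The Euclidean norm is $C^2$ away from the origin. Taylor's theorem gives
\[
 D(x^*+d)=D(x^*)+\ip{x^*}{d}/\norm{x^*}_2+O(\norm d_2^2).
\]
For small $d$ we also have $D(x^*+d)\ge\norm{x^*}_2/2$.

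\emph{Combining them.} Write
\[
 \Phi(x^*+d)-\Phi(x^*)=\frac{N(x^*+d)D(x^*)-N(x^*)D(x^*+d)}{D(x^*+d)D(x^*)}.
\]
After substituting the two expansions, the numerator equals
\[
 N'(d)\norm{x^*}_2-\norm{x^*}_1\ip{x^*}{d}/\norm{x^*}_2+O(\norm d_2^2).
\]
Replace $1/(D(x^*+d)D(x^*))$ by $1/\norm{x^*}_2^2+O(\norm d_2)$. The numerator is $O(\norm d_2)$, so the product of this correction with the numerator is $O(\norm d_2^2)$. This yields \eqref{Eq_expansion}, where $\Phi'(x^*;d)$ denotes the right-hand side of \eqref{Eq_derivative}.

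\emph{The directional derivative.} Apply \eqref{Eq_expansion} to $td$ with $t\downarrow0$. The candidate expression is positively homogeneous, so dividing by $t$ gives
\[
 \bigl(\Phi(x^*+td)-\Phi(x^*)\bigr)/t=\Phi'(x^*;d)+O(t)\norm{d}_2^2.
\]
Hence the limit exists and equals \eqref{Eq_derivative}.

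\emph{Structural properties.} The expression in \eqref{Eq_derivative} is a linear function of $d$ plus $\norm{d_{S^c}}_1/\norm{x^*}_2$. This second term is a seminorm, so it is convex, continuous and positively homogeneous, and these properties survive adding a linear function.

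The only point needing care is uniformity of the remainder constant. Uniformity comes from the exactness of the $\ell_1$ expansion on the ball of radius $\min_{S}|x^*_i|$, together with a uniform bound on $\nabla^2\norm{\cdot}_2$ on the ball $\{x:\norm{x-x^*}_2\le\norm{x^*}_2/2\}$. The resulting constant depends only on $x^*$.
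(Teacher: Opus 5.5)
Your proof is correct and follows essentially the same route as the paper: an exact sign-preserving expansion of $\norm{\cdot}_1$ near $x^*$, combined with a Taylor expansion of the Euclidean part, then substitution of $td$ and $t\downarrow0$, with continuity, convexity, and positive homogeneity read off from the explicit formula. The only cosmetic difference is that you expand $\norm{\cdot}_2$ and use the quotient identity, whereas the paper expands $1/\norm{\cdot}_2$ and multiplies the two expansions; your explicit remark on the uniformity of the remainder constant is a welcome detail the paper leaves implicit.
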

\begin{proof}
For sufficiently small $d$, the coordinates on $S$ retain their signs,
and hence
\begin{align}\label{diff_l1}
 \norm{x^*+d}_1
 =\norm{x^*}_1+\ip{\sgn(x^*_S)}{d_S}+\norm{d_{S^c}}_1.
\end{align}
Taylor expansion of $1/\|x\|_2$ at $x^*\ne0$ gives
\[
 \frac1{\norm{x^*+d}_2}
 =\frac1{\norm{x^*}_2}
 -\frac{\ip{x^*}{d}}{\norm{x^*}_2^3}+O(\norm d_2^2).
\]
Multiplying these expressions and replacing $d$ by $td$ gives
\[
\begin{aligned}
 \frac{\Phi(x^*+td)-\Phi(x^*)}{t}
=\frac{\ip{\sgn(x^*_S)}{d_S}+\norm{d_{S^c}}_1}{\norm{x^*}_2}-\frac{\norm{x^*}_1\ip{x^*}{d}}{\norm{x^*}_2^3}
 +O(t\norm d_2^2).
\end{aligned}
\]
For fixed $d$, letting $t\downarrow0$ proves \eqref{Eq_derivative}.
The same product expansion yields \eqref{Eq_expansion}.
Continuity, convexity, and positive homogeneity follow from
\eqref{Eq_derivative}.
\end{proof}

When $\Phi'(x^*;d)=0$, the behavior along the ray depends on its
curvature. The next lemma shows that $\Phi$ decreases for small
positive steps unless $d$ is parallel to $x^*$.
\begin{lemma}\label{Lem_ray}
Let $x^*\ne0$ and $d\in\R^n$.
If $\Phi'(x^*;d)=0$, then
the second-order directional
derivative is given by
\begin{equation}\label{Eq_curvature}
 \Phi''(x^*;d):=
 \lim_{t\downarrow0}
 \frac{2\bigl(\Phi(x^*+td)-\Phi(x^*)-t\Phi'(x^*;d)\bigr)}{t^2}
 =\frac{\norm{x^*}_1}{\norm{x^*}_2^5}
 \bigl(\ip{x^*}{d}^2-\norm{x^*}_2^2\norm d_2^2\bigr)\le0.
\end{equation}
The inequality is strict if $d\notin\operatorname{span}\{x^*\}$.
If $d\in\operatorname{span}\{x^*\}$, then
$\Phi(x^*+td)=\Phi(x^*)$ for all sufficiently small $t\ge0$.
\end{lemma}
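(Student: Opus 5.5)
The plan is to compute $\Phi$ exactly along the ray $x^*+td$ for small $t\ge0$ and expand to second order in $t$. For small $t$ the signs on $S$ are preserved, so by \eqref{diff_l1} the $\ell_1$ norm is exactly affine in $t$:
\[
 \norm{x^*+td}_1=\norm{x^*}_1+t\alpha,\qquad
 \alpha:=\ip{\sgn(x^*_S)}{d_S}+\norm{d_{S^c}}_1 .
\]
Write $a:=\norm{x^*}_1$, $r:=\norm{x^*}_2$, $\beta:=\ip{x^*}{d}$ and $\gamma:=\norm d_2^2$. Then
\[
 \norm{x^*+td}_2^{-1}=(r^2+2t\beta+t^2\gamma)^{-1/2}.
\]
A second-order Taylor expansion in $t$ gives
\[
 \norm{x^*+td}_2^{-1}=\frac1r-\frac{t\beta}{r^3}+\frac{t^2}{2}\Bigl(\frac{3\beta^2}{r^5}-\frac{\gamma}{r^3}\Bigr)+O(t^3).
\]

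Next, multiply this by $a+t\alpha$ and collect terms. The linear coefficient is $\alpha/r-a\beta/r^3=\Phi'(x^*;d)$, consistent with Lemma~\ref{Prop_derivative}. The quadratic coefficient, doubled, is
\[
 -\frac{2\alpha\beta}{r^3}+a\Bigl(\frac{3\beta^2}{r^5}-\frac{\gamma}{r^3}\Bigr).
\]
Now use the hypothesis $\Phi'(x^*;d)=0$, which means $\alpha=a\beta/r^2$. Substituting, the quadratic term becomes $\frac{a}{r^5}(\beta^2-r^2\gamma)$, which is exactly \eqref{Eq_curvature}. Since the $O(t^3)$ remainder disappears after dividing by $t^2$, the limit exists and equals this value.

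For the sign claims, Cauchy--Schwarz gives $\beta^2\le r^2\gamma$, with equality iff $d\in\operatorname{span}\{x^*\}$. Because $a>0$, this yields $\Phi''(x^*;d)\le0$, strictly when $d\notin\operatorname{span}\{x^*\}$. For $d=cx^*$, note that $x^*+td=(1+tc)x^*$ with $1+tc>0$ for small $t\ge0$. Scale invariance $\Phi(\lambda x)=\Phi(x)$ for $\lambda\neq0$ then gives $\Phi(x^*+td)=\Phi(x^*)$.

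The only delicate step is the expansion of $(r^2+2t\beta+t^2\gamma)^{-1/2}$. I would obtain it from $(1+u)^{-1/2}=1-u/2+3u^2/8+O(u^3)$ with $u=(2t\beta+t^2\gamma)/r^2$, keeping the $t^2$ contributions from both $-u/2$ and $3u^2/8$. The rest is bookkeeping.
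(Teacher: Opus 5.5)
Your proposal is correct and follows essentially the paper's route. Both arguments use the exact affine form of $\norm{x^*+td}_1$ for small $t\ge0$, the expansion $(1+u)^{-1/2}=1-u/2+3u^2/8+\cdots$, Cauchy--Schwarz for the sign, and scale invariance for the case $d\in\operatorname{span}\{x^*\}$. The only difference is that the paper substitutes $\alpha=a\beta/r^2$ first and factors out $\Phi(x^*)$ before expanding, whereas you expand in general and substitute at the end, which also shows the second-order coefficient without the hypothesis.
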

\begin{proof}
By \eqref{Eq_derivative}, the assumption $\Phi'(x^*;d)=0$ gives
\[
 \ip{\sgn(x^*_S)}{d_S}+\norm{d_{S^c}}_1
 =\frac{\norm{x^*}_1}{\norm{x^*}_2^2}\ip{x^*}{d}.
\]
Together with \eqref{diff_l1}, this identity gives, for
sufficiently small $t\ge0$,
\[
 \Phi(x^*+td)=\Phi(x^*)
 \frac{1+\dfrac{\ip{x^*}{d}}{\norm{x^*}_2^2}t}
 {\left(1+2\dfrac{\ip{x^*}{d}}{\norm{x^*}_2^2}t
 +\dfrac{\norm d_2^2}{\norm{x^*}_2^2}t^2\right)^{1/2}}.
\]
Using $(1+u)^{-1/2}=1-u/2+3u^2/8+o(u^2)$ as $u\to0$
and multiplying by the numerator gives
\[
 \Phi(x^*+td)=\Phi(x^*)
 +\frac{\norm{x^*}_1}{2\norm{x^*}_2^5}
 \bigl(\ip{x^*}{d}^2-\norm{x^*}_2^2\norm d_2^2\bigr)t^2
 +o(t^2).
\]
The definition of $\Phi''(x^*;d)$ now gives \eqref{Eq_curvature}.
The Cauchy--Schwarz inequality gives nonpositivity, with strict inequality precisely
when $d\notin\operatorname{span}\{x^*\}$.
Finally, if $d=\lambda x^*$, scale invariance gives
$\Phi(x^*+td)=\Phi((1+t\lambda)x^*)=\Phi(x^*)$
for all sufficiently small $t\ge0$.
\end{proof}

\begin{remark}\label{Rem_feasible_curvature}
The condition $Ax^*=b\ne0$ for $x^*\in\Delta$ implies
\begin{equation}\label{Eq_no_parallel_feasible}
 \ker(A)\cap\operatorname{span}\{x^*\}=\{0\}.
\end{equation}
Indeed, $A(\lambda x^*)=\lambda b=0$ forces $\lambda=0$.
By Lemma~\ref{Lem_ray}, every nonzero feasible direction $d$ with
$\Phi'(x^*;d)=0$ therefore satisfies $\Phi''(x^*;d)<0$.
\end{remark}

Lemmas~\ref{Prop_subdiff} and~\ref{Prop_derivative} give three
equivalent tests for criticality: a directional condition, a
null-space inequality, and a multiplier condition.
\begin{proposition}\label{Prop_stationary}
Let $x^*\in\Delta$. The following statements are equivalent:
\begin{itemize}
\item[(i)] $x^*$ is a critical point.
\item[(ii)] $\Phi'(x^*;d)\ge0$ for every $d\in\ker(A)$.
\item[(iii)] For every $d\in\ker(A)$,
$ \norm{d_{S^c}}_1\ge
 \left|\ip{\sgn(x^*_S)}{d_S}
 -\frac{\norm{x^*}_1}{\norm{x^*}_2^2}\ip{x^*}{d}\right|$.
 
\item[(iv)] There exist $z\in\R^n$ and $\lambda\in\R^m$ such  that
\[
 z_S=\sgn(x^*_S),\ \ |z_i|\le 1\,(i\in S^c),  {\ \ and \ \  }\frac{z}{\norm{x^*}_2}
 -\frac{\norm{x^*}_1x^*}{\norm{x^*}_2^3}
 +A^\top\lambda=0.
\]
\end{itemize}

\end{proposition}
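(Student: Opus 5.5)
The plan is to prove (i)$\Leftrightarrow$(iv), (iv)$\Leftrightarrow$(ii) by convex duality, and (ii)$\Leftrightarrow$(iii) by direct manipulation of \eqref{Eq_derivative}.

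\emph{(i)$\Leftrightarrow$(iv).} By Lemma~\ref{Prop_subdiff}, $0\in\partial f(x^*)$ holds if and only if some element of $\partial\Phi(x^*)$, as described in \eqref{Eq_Phi}, lies in $-\mathcal N_\Delta(x^*)=\operatorname{range}(A^\top)$. Writing that element as $-A^\top\lambda$ gives exactly the system in (iv), so this equivalence is immediate.

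\emph{(iv)$\Leftrightarrow$(ii).} Write $h(d):=\Phi'(x^*;d)$, which by Lemma~\ref{Prop_derivative} is finite, convex and positively homogeneous. Formula \eqref{Eq_derivative} shows that $h$ is the support function of the compact convex set $C:=\partial\Phi(x^*)$ in \eqref{Eq_Phi}. Indeed, $\max\{\ip zd: z_S=\sgn(x^*_S),\ |z_i|\le1\ (i\in S^c)\}=\ip{\sgn(x^*_S)}{d_S}+\norm{d_{S^c}}_1$, and the second term of \eqref{Eq_derivative} is linear in $d$.

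For (iv)$\Rightarrow$(ii), take $d\in\ker(A)$ and set $v:=-A^\top\lambda\in C$. Then $h(d)\ge\ip vd=-\ip{\lambda}{Ad}=0$.

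For (ii)$\Rightarrow$(iv), suppose $C\cap\operatorname{range}(A^\top)=\emptyset$. Since $C$ is compact and convex and $\operatorname{range}(A^\top)$ is a closed subspace, strict separation yields $d$ with $\ip vd<0$ for all $v\in C$ and $\ip wd=0$ for all $w\in\operatorname{range}(A^\top)$. The second condition means $d\in(\operatorname{range}(A^\top))^\perp=\ker(A)$. The first gives $h(d)=\max_{v\in C}\ip vd<0$, contradicting (ii). Thus $0\in C+\operatorname{range}(A^\top)$, which is (iv). This separation step is the only point needing care; compactness of $C$ makes it routine.

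\emph{(ii)$\Leftrightarrow$(iii).} Multiply \eqref{Eq_derivative} by $\norm{x^*}_2>0$. Then (ii) reads
\[
\norm{d_{S^c}}_1+\ip{\sgn(x^*_S)}{d_S}-\frac{\norm{x^*}_1}{\norm{x^*}_2^2}\ip{x^*}{d}\ge0
\]
for all $d\in\ker(A)$. Since $\ker(A)$ is symmetric, this inequality also holds with $d$ replaced by $-d$. Under that replacement $\norm{d_{S^c}}_1$ is unchanged and the remaining bracket changes sign. The two inequalities together are precisely $\norm{d_{S^c}}_1\ge|\cdot|$, which is (iii). Conversely, (iii) trivially implies the inequality for $d$.
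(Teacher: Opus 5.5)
Your proof is correct. For (i)$\Leftrightarrow$(iv) and (ii)$\Leftrightarrow$(iii) it matches the paper's argument: Lemma~\ref{Prop_subdiff} for the first, and the $d\mapsto -d$ symmetry of $\ker(A)$ for the second.

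For (ii)$\Leftrightarrow$(iv) you take a different route. The paper introduces the convex function $H(x)=\norm{x}_1/\norm{x^*}_2-\norm{x^*}_1\ip{x^*}{x}/\norm{x^*}_2^3$ and observes $H'(x^*;\cdot)=\Phi'(x^*;\cdot)$. It then uses convexity to rewrite (ii) as global minimality of $H$ on $\Delta=x^*+\ker(A)$. Finally it applies the convex optimality condition $0\in\partial H(x^*)+\mathcal N_\Delta(x^*)$ together with $\partial H(x^*)=\partial\Phi(x^*)$. You instead identify $\Phi'(x^*;\cdot)$ with the support function of the compact convex set $\partial\Phi(x^*)$, and you separate that set strictly from the subspace $\operatorname{range}(A^\top)$.

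Both arguments rest on the same duality, since the convex optimality condition the paper invokes is itself a separation result. Your version is self-contained and makes explicit the identity $\Phi'(x^*;d)=\max_{v\in\partial\Phi(x^*)}\ip vd$. The equivalence then reduces to the elementary fact that a compact convex set meets a subspace if and only if its support function is nonnegative on the orthogonal complement. The paper's version is shorter given standard convex calculus. It also adds the interpretation that criticality of $x^*$ means $x^*$ globally minimizes the convex model $H$ over $\Delta$.

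Two cosmetic remarks. First, getting from a strictly separating hyperplane to $\ip wd=0$ for all $w\in\operatorname{range}(A^\top)$ uses the fact that a linear functional bounded below on a subspace vanishes on it; that deserves one sentence. Second, the paper already uses $C$ for the sublevel set \eqref{Eq_sublevel}, so a different symbol would avoid a clash.
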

\begin{proof}
By Definition~\ref{Def_critical} and Lemma~\ref{Prop_subdiff},
$0\in\partial f(x^*)$ holds if and only if the vectors $z$ and
$\lambda$ in (iv) exist. Thus (i) and (iv) are equivalent.

To prove (ii) $\Longleftrightarrow$ (iv), consider the continuous
convex function
\[
 H(x)=\frac{\norm{x}_1}{\norm{x^*}_2}
 -\frac{\norm{x^*}_1}{\norm{x^*}_2^3}\ip{x^*}{x},
 \qquad x\in\R^n.
\]
By \eqref {Eq_derivative},
$H'(x^*;d)=\Phi'(x^*;d)$ for every $d\in\R^n$.
Convexity of $H$ and $\Delta=x^*+\ker(A)$ show that (ii) holds
if and only if $x^*$ minimizes $H$ on $\Delta$, or equivalently,
\[
 0\in\partial H(x^*)+\mathcal N_\Delta(x^*).
\]
Since $\partial H(x^*)=\partial\Phi(x^*)$, this is (iv).

Finally, \eqref{Eq_derivative} shows that (ii) is equivalent to
\[
 \ip{\sgn(x^*_S)}{d_S}
 -\frac{\norm{x^*}_1}{\norm{x^*}_2^2}\ip{x^*}{d}
 +\norm{d_{S^c}}_1\ge0
 \qquad(d\in\ker(A)).
\]
Since $\ker(A)$ is a subspace, the inequality also holds with $d$
replaced by $-d$. The linear terms change sign, whereas
$\norm{(-d)_{S^c}}_1=\norm{d_{S^c}}_1$. Combining the two
inequalities gives
\[
 -\norm{d_{S^c}}_1
 \le \ip{\sgn(x^*_S)}{d_S}
 -\frac{\norm{x^*}_1}{\norm{x^*}_2^2}\ip{x^*}{d}
 \le \norm{d_{S^c}}_1,
\]
which is precisely (iii). Conversely, (iii) bounds the linear
expression below by $-\norm{d_{S^c}}_1$, so
\eqref{Eq_derivative} gives $\Phi'(x^*;d)\ge0$ for every
$d\in\ker(A)$. Thus (ii) and (iii) are equivalent.
\end{proof}

\subsection{Cone Geometry of \texorpdfstring{$C$}{C}}\label{Sub_cones}
Let $x^*\ne0$ and denote its reference sublevel set by
\begin{equation}\label{Eq_sublevel}
 C=\{x\in\R^n\setminus\{0\}:\Phi(x)\le\Phi(x^*)\}.
\end{equation}
The \emph{radial cone} to $C$ at $x^*$ is defined by
\[
 \mathcal R_C(x^*)=
 \{d\in \mathbb{R}^n :\exists\,\varepsilon>0,\ x^*+td\in C
                  \text{ for all }0<t<\varepsilon\}.
\]
The \emph{global descent cone} of $\Phi$ at $x^*$ is defined by
\begin{equation}\label{Eq_descent}
\begin{aligned}
 D_\Phi(x^*):=\{d\in\R^n:\exists\,t>0,x^*+td\ne0,\Phi(x^*+td)\le\Phi(x^*)\}.
\end{aligned}
\end{equation}
Both cones contain zero and are invariant under positive scaling.
The radial cone requires $x^*+td\in C$ for all sufficiently small
$t>0$; the global descent cone requires only one such step.
The latter need not be convex or closed. Intersecting either cone
with $\ker(A)$ restricts it to feasible directions.

Following \cite[Eq.~(4.5)]{ZYP}, define the auxiliary function
\begin{equation*}
  \Psi_{x^*}(x):=\norm{x}_1-\Phi(x^*)\norm{x}_2,
 \quad x\in\R^n.
\end{equation*}
For every $x\ne0$,
\begin{equation}\label{Eq_gap}
 \Psi_{x^*}(x)=\norm{x}_2\bigl(\Phi(x)-\Phi(x^*)\bigr).
\end{equation}
For $x\ne0$, $x\in C$ if and only if $\Psi_{x^*}(x)\le0$, so objective
comparisons can be expressed without division by a norm.
The radial cone is determined exactly by the directional derivative.

\begin{lemma}\label{Prop_localcones}
For every $x^*\ne0$, the radial cone and the tangent cone satisfy
\begin{equation}\label{Eq_localcones}
 \mathcal R_C(x^*)=\mathcal T_C(x^*)
 =\{d\in\R^n:\Phi'(x^*;d)\le0\}.
\end{equation}
These cones are closed and convex, and
$\mathcal R_C(x^*)\subseteq D_\Phi(x^*)$.
\end{lemma}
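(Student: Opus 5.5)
We prove the chain of inclusions
\[
\mathcal R_C(x^*)\subseteq\mathcal T_C(x^*)\subseteq K
\quad\text{and}\quad
K\subseteq\mathcal R_C(x^*),
\qquad\text{where } K:=\{d:\Phi'(x^*;d)\le0\}.
\]
The inclusion $\mathcal R_C(x^*)\subseteq\mathcal T_C(x^*)$ is immediate from the definitions: take $d_k=d$ and any $t_k\downarrow0$ with $t_k<\varepsilon$.

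For $\mathcal T_C(x^*)\subseteq K$, let $x^*+t_kd_k\in C$ with $t_k\downarrow0$ and $d_k\to d$. The expansion \eqref{Eq_expansion} applied to $t_kd_k$, together with positive homogeneity of $\Phi'(x^*;\cdot)$, gives
\[
0\ge \Phi(x^*+t_kd_k)-\Phi(x^*)=t_k\Phi'(x^*;d_k)+O(t_k^2\norm{d_k}_2^2).
\]
Dividing by $t_k$ and letting $k\to\infty$, continuity of $\Phi'(x^*;\cdot)$ (Lemma~\ref{Prop_derivative}) yields $\Phi'(x^*;d)\le0$.

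The main step is $K\subseteq\mathcal R_C(x^*)$. Let $\Phi'(x^*;d)\le0$. If $d=0$ there is nothing to prove, since $x^*\in C$. Otherwise $x^*+td\ne0$ for all small $t>0$. We split into two cases.
\begin{itemize}
\item[(a)] If $\Phi'(x^*;d)<0$, then by the definition of the directional derivative, $\Phi(x^*+td)-\Phi(x^*)=t\Phi'(x^*;d)+o(t)<0$ for all small $t>0$.
\item[(b)] If $\Phi'(x^*;d)=0$, we use Lemma~\ref{Lem_ray}. If $d\in\operatorname{span}\{x^*\}$, then $\Phi(x^*+td)=\Phi(x^*)$ for small $t\ge0$. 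Otherwise $\Phi''(x^*;d)<0$, so $\Phi(x^*+td)=\Phi(x^*)+\tfrac12\Phi''(x^*;d)t^2+o(t^2)<\Phi(x^*)$ for small $t>0$.
\end{itemize}
In either case $x^*+td\in C$ on some interval $(0,\varepsilon)$. This borderline case is the only delicate point: first-order information alone does not decide membership when $\Phi'(x^*;d)=0$, and the curvature from Lemma~\ref{Lem_ray} resolves it. Note that the argument does not need feasibility, so no use of \eqref{Eq_no_parallel_feasible} is required here.

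Closedness and convexity of $K$ follow because $\Phi'(x^*;\cdot)$ is continuous and convex by Lemma~\ref{Prop_derivative}. Hence $K$ is a sublevel set, at level zero, of a continuous convex function.

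Finally, $\mathcal R_C(x^*)\subseteq D_\Phi(x^*)$ is immediate. For $d\in\mathcal R_C(x^*)$, any $t\in(0,\varepsilon)$ satisfies $x^*+td\in C$, so $x^*+td\ne0$ and $\Phi(x^*+td)\le\Phi(x^*)$.
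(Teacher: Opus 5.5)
Your proof is correct and follows essentially the same route as the paper. You use the trivial inclusions, then the expansion-and-limit argument for $\mathcal T_C(x^*)\subseteq\{d:\Phi'(x^*;d)\le0\}$, then the split into $\Phi'(x^*;d)<0$ versus $\Phi'(x^*;d)=0$ (the latter settled by the ray-curvature lemma) for the reverse inclusion, and finally you inherit closedness and convexity from $\Phi'(x^*;\cdot)$.
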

\begin{proof}
The  inclusions $\mathcal R_C(x^*)\subseteq
\mathcal T_C(x^*)$ and $\mathcal R_C(x^*)\subseteq D_\Phi(x^*)$ are trivial by their definitions.

Fix any $d\in\mathcal T_C(x^*)$. There exist sequences  $t_k\downarrow0$ and $d_k\to d$
such that $x^*+t_kd_k\in C$. Equations~\eqref{Eq_derivative}
and~\eqref{Eq_expansion} give
\[
 0\ge\frac{\Phi(x^*+t_kd_k)-\Phi(x^*)}{t_k}
 =\Phi'(x^*;d_k)+O(t_k\norm{d_k}_2^2).
\]
Passing to the limit gives $\Phi'(x^*;d)\le0$.
Thus $\mathcal T_C(x^*)\subseteq\{d\in\R^n:\Phi'(x^*;d)\le0\}$.

Conversely, let $\Phi'(x^*;d)\le0$. If the inequality is strict,
then $\Phi(x^*+td)<\Phi(x^*)$ for all sufficiently small $t>0$.
If $\Phi'(x^*;d)=0$, Lemma~\ref{Lem_ray} gives the same strict
inequality when $d$ is not parallel to $x^*$, and equality otherwise.
In either case, $d\in\mathcal R_C(x^*)$. Together with the preceding
inclusions, this proves \eqref{Eq_localcones}.

Continuity and convexity of $\Phi'(x^*;\cdot)$ imply closedness and
convexity of these cones. 
\end{proof}

The next lemma gives an explicit description of the global descent cone.
For directions not parallel to $x^*$, membership is determined by an
asymptotic comparison and comparisons at finitely many positive steps
where a component becomes zero.
\begin{lemma}\label{Prop_descent}
Let $x^*\ne0$ and $S=\supp(x^*)$. For each $i\in S$, define
\[
 E_i:=\left\{d\in\R^n:\ x_i^*d_i<0,\quad
 \norm{d_i x^*-x_i^*d}_1
 \le\Phi(x^*)\norm{d_i x^*-x_i^*d}_2\right\}.
\]
Then
\begin{equation}\label{Eq_cone}
 D_\Phi(x^*)=
 \operatorname{span}\{x^*\}
 \ \cup\ \{d\ne0:\Phi(d)<\Phi(x^*)\}
 \ \cup\ \bigcup_{i\in S}E_i.
\end{equation}
In particular, for $d\notin\operatorname{span}\{x^*\}$,
$d\in D_\Phi(x^*)$ if and only if either
$\Phi(d)<\Phi(x^*)$ or
\[
 \Phi\left(x^*-\frac{x_i^*}{d_i}d\right)\le\Phi(x^*)
 \quad\text{for some }i\in S\text{ with }x_i^*d_i<0.
\]
Thus, besides the comparison involving $\Phi(d)$, at most $|S|$
additional comparisons are required.
\end{lemma}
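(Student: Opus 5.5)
The plan is to study the scalar function $g(t):=\Psi_{x^*}(x^*+td)$ for $t>0$. By \eqref{Eq_gap}, for $x^*+td\ne0$ we have $x^*+td\in C$ if and only if $g(t)\le0$. Hence, for $d\notin\operatorname{span}\{x^*\}$ (where $x^*+td\ne0$ for every $t$), $d\in D_\Phi(x^*)$ if and only if $g(t)\le0$ for some $t>0$.

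First dispose of $d\in\operatorname{span}\{x^*\}$. If $d=0$, membership is trivial. If $d=\lambda x^*$ with $\lambda\ne0$, choose a small $t>0$ with $1+t\lambda>0$; scale invariance gives $\Phi(x^*+td)=\Phi(x^*)$, so $d\in D_\Phi(x^*)$. Thus $\operatorname{span}\{x^*\}\subseteq D_\Phi(x^*)$.

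Now fix $d\notin\operatorname{span}\{x^*\}$. The key observation is that $g$ is the difference of the convex piecewise linear function $t\mapsto\norm{x^*+td}_1$ and $\Phi(x^*)$ times the function $t\mapsto\norm{x^*+td}_2$. The latter is strictly convex along the line, because $x^*$ and $d$ are linearly independent. Consequently, on each interval between consecutive breakpoints of the $\ell_1$ part, $g$ is a linear function minus a strictly convex function, so it is strictly concave. The minimum of $g$ over $(0,\infty)$ therefore is either approached as $t\to\infty$ or attained at a breakpoint $t>0$; near $0^+$ it is dominated by smaller values further along.

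The breakpoints in $(0,\infty)$ are the values $t=-x_j^*/d_j$ at which some coordinate vanishes. For $j\in S^c$ such a breakpoint is $t=0$, not in $(0,\infty)$. For $i\in S$ it lies in $(0,\infty)$ exactly when $x_i^*d_i<0$. At that step, $x^*-(x_i^*/d_i)d=(d_ix^*-x_i^*d)/d_i$. Since $|d_i|>0$ and $d_ix^*-x_i^*d\ne0$ by independence, scale invariance turns the condition $g(t)\le0$ into $\norm{d_ix^*-x_i^*d}_1\le\Phi(x^*)\norm{d_ix^*-x_i^*d}_2$, which is precisely $d\in E_i$.

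For the asymptotic end, $g(t)/t\to\Psi_{x^*}(d)=\norm d_2(\Phi(d)-\Phi(x^*))$. If $\Phi(d)<\Phi(x^*)$, then $g(t)<0$ for large $t$, so $d\in D_\Phi(x^*)$. This settles the inclusion ``$\supseteq$'' in \eqref{Eq_cone}.

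For ``$\subseteq$'', suppose $g(t_0)\le0$ for some $t_0>0$, that $\Phi(d)\ge\Phi(x^*)$, and that $d\notin E_i$ for every admissible $i$, so $g>0$ at all positive breakpoints. Let $t_0$ lie in a piece $(a,b)$ with $a\ge0$ and $b\le\infty$, where $g$ is strictly concave. If $b<\infty$, then $g(b)>0$ and $g(a)\ge0$, and we need $g(a)\ge0$ also when $a=0$; this holds because $g(0)=\Psi_{x^*}(x^*)=0$. Strict concavity then gives $g(t_0)>\min\{g(a),g(b)\}\ge0$, a contradiction.

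If $b=\infty$, the slope of a concave function is nonincreasing and tends to $\lim g(t)/t=\Psi_{x^*}(d)\ge0$. So $g$ is nondecreasing on $(a,\infty)$ and, being strictly concave, strictly increasing there. Hence $g(t_0)>g(a)\ge0$, again a contradiction.

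This is the step I expect to need the most care: the boundary case $\Phi(d)=\Phi(x^*)$ on the unbounded piece, together with the handling of $a=0$ and of the coordinates in $S^c$. It is resolved using strict concavity together with the limiting slope.

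The ``in particular'' reformulation then follows directly from the computation at the breakpoints given above.
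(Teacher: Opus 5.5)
Your proposal is correct and follows essentially the same route as the paper. Both arguments use the function $\Psi_{x^*}(x^*+td)$, the coordinate-vanishing breakpoints, concavity between breakpoints with positivity at those breakpoints, and monotone increase on the last piece. The only difference is on the unbounded piece. You get strict increase from strict concavity (by linear independence of $x^*$ and $d$) together with the limiting slope $\Psi_{x^*}(d)\ge0$. The paper instead computes the derivative there and applies the strict Cauchy--Schwarz inequality to obtain $F_d'(t)>\norm d_1-\Phi(x^*)\norm d_2\ge0$.
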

\begin{proof}
Set $\omega=\Phi(x^*)$. For each $d\in\R^n$, define
\[
 F_d(t):=\Psi_{x^*}(x^*+td)
 =\norm{x^*+td}_1-\omega\norm{x^*+td}_2,
 \qquad t\ge0.
\]
This function is continuous and satisfies $F_d(0)=0$.
For every $t>0$ such that $x^*+td\ne0$, identity~\eqref{Eq_gap}
implies that $F_d(t)\le0$ if and only if
$\Phi(x^*+td)\le\omega$.

For the inclusion of the right-hand side of \eqref{Eq_cone}
in $D_\Phi(x^*)$, consider each set in the union.
If $d\in\operatorname{span}\{x^*\}$, scale invariance gives
$\Phi(x^*+td)=\omega$ for all sufficiently small $t>0$.
If $d\ne0$ and $\Phi(d)<\omega$, scale invariance and continuity
of $\Phi$ at $d$ give
\[
\begin{aligned}
 \lim_{t\to\infty}\Phi(x^*+td)
=\lim_{t\to\infty}\Phi\left(\frac{x^*+td}{t}\right)=\lim_{t\to\infty}\Phi\left(d+\frac{x^*}{t}\right)
 =\Phi(d)<\omega,
\end{aligned}
\]
so a sufficiently large positive step gives $d\in D_\Phi(x^*)$.
Finally, let $d\in E_i$ for some $i\in S$ and set
$t_i=-x_i^*/d_i>0$. If $x^*+t_i d=0$, then
$d\in\operatorname{span}\{x^*\}$, which has already been considered.
Otherwise, the defining inequality for $E_i$ gives
\[
 |d_i|F_d(t_i)
 =\norm{d_i x^*-x_i^*d}_1
  -\omega\norm{d_i x^*-x_i^*d}_2\le0.
\]
Hence $d\in D_\Phi(x^*)$ in this case as well.

For the reverse inclusion, consider a direction outside the
union in \eqref{Eq_cone}; that is, let $d\in\R^n$ satisfy
\[
 d\notin\operatorname{span}\{x^*\},\qquad
 \norm d_1-\omega\norm d_2\ge0,
 \qquad d\notin E_i\quad\text{for all }i\in S.
\]
In particular, $x^*+td\ne0$ for every $t\ge0$.
Let $0<t_1<\cdots<t_N$ be the distinct positive steps at which
$x_i^*+td_i=0$ for some $i\in S$, and put $t_0=0$.
The list is finite and may be empty, in which case $N=0$.
For each $j\in\{1,\ldots,N\}$, choose an index $i\in S$ such that
$t_j=-x_i^*/d_i>0$. Since $x_i^*d_i<0$ and $d\notin E_i$,
\[
 |d_i|F_d(t_j)
 =\norm{d_i x^*-x_i^*d}_1
  -\omega\norm{d_i x^*-x_i^*d}_2>0.
\]
Thus $F_d(t_j)>0$ at every such step.

Suppose first that $N\ge1$, and fix $j\in\{0,\ldots,N-1\}$.
By construction, $x_i^*+td_i$ is nonzero throughout
$(t_j,t_{j+1})$ for each $i\in S$ and therefore has a constant
sign there, denoted by $\sigma_i\in\{-1,1\}$.
The identities $x^*_{S^c}=0$ and $|td_i|=t|d_i|$ for $t\ge0$
give
\[
 \norm{x^*+td}_1
 =\sum_{i\in S}\sigma_i(x_i^*+td_i)
   +t\norm{d_{S^c}}_1,
 \qquad t\in[t_j,t_{j+1}].
\]
Here continuity extends the identity to both endpoints, even if
a component becomes zero there. The right-hand side is affine in
$t$. Since $t\mapsto-\omega\norm{x^*+td}_2$ is concave by
convexity of the Euclidean norm and $\omega>0$, $F_d$ is concave
on $[t_j,t_{j+1}]$ and hence bounded below by the chord joining
its endpoint values. These values are positive except at $t_0=0$,
where $F_d(t_0)=0$. It follows that $F_d(t)>0$ for every
$t\in(0,t_N]$.

It remains to consider $(t_N,\infty)$, with $t_N=t_0=0$ when
$N=0$. All positive zeros of the components have been passed,
so $x_i^*+td_i$ has the sign of $d_i$ whenever $d_i\ne0$.
Thus $F_d$ is differentiable on this interval, with
\[
 F_d'(t)=\norm d_1-
 \omega\frac{\ip{x^*+td}{d}}{\norm{x^*+td}_2}
 >\norm d_1-\omega\norm d_2\ge0.
\]
The strict inequality follows from the Cauchy--Schwarz inequality:
$d\notin\operatorname{span}\{x^*\}$ implies that $x^*+td$ is
not parallel to $d$. Hence $F_d$ is strictly increasing on
$(t_N,\infty)$ and continuous at $t_N$.
If $N\ge1$, the preceding concavity argument gives $F_d(t)>0$
on $(0,t_N]$, while strict increase from $F_d(t_N)>0$ gives
$F_d(t)>0$ on $(t_N,\infty)$. These two intervals cover
$(0,\infty)$. If $N=0$, the final interval is already
$(0,\infty)$, and strict increase from $F_d(0)=0$ gives the
same conclusion. Thus, in either case, $F_d(t)>0$ for every
$t>0$. Since $x^*+td\ne0$, identity~\eqref{Eq_gap} yields
\[
 \Phi(x^*+td)>\Phi(x^*)\qquad\text{for every }t>0.
\]
No positive step therefore satisfies the defining inequality
of $D_\Phi(x^*)$, so $d\notin D_\Phi(x^*)$. This completes the
proof of the reverse inclusion and establishes \eqref{Eq_cone}.

Finally, membership in $E_i$ admits an equivalent objective-value
comparison. Fix $d\notin\operatorname{span}\{x^*\}$ and an index
$i\in S$ such that $x_i^*d_i<0$. The positive step
$t_i=-x_i^*/d_i$ makes the $i$th component of $x^*+t_i d$ zero.
The vector $x^*+t_i d$ itself is nonzero, since otherwise $d$
would be parallel to $x^*$. Since
$d_i x^*-x_i^*d=d_i(x^*+t_i d)$ and $d_i\ne0$,
absolute homogeneity of the norms shows that the defining inequality
for $E_i$ is equivalent to $\Phi(x^*+t_i d)\le\omega$.
Since $d\notin\operatorname{span}\{x^*\}$, formula~\eqref{Eq_cone}
shows that $d\in D_\Phi(x^*)$ precisely when either
$\Phi(d)<\Phi(x^*)$ or $d\in E_i$ for some $i\in S$.
The equivalence just established gives the stated membership test.
Each index $i\in S$ contributes at most one such step, so at most
$|S|$ additional comparisons are required.
\end{proof}

The following outer estimates of the global descent cone yield
sufficient recovery conditions.
\begin{lemma}\label{Prop_outer}
Let $x^*\ne0$ and $S=\supp(x^*)$. The global descent cone satisfies
\begin{equation}\label{Eq_outer}
\begin{aligned}
 D_\Phi(x^*)
 &\subseteq\left\{d\in\R^n:\ \norm{d_{S^c}}_1
 \le\sqrt{|S|}\,\operatorname{dist}
       (d,\operatorname{span}\{x^*\})\right\}\\
 &\subseteq\left\{d\in\R^n:\ \norm{d_{S^c}}_1
 \le\sqrt{|S|}\norm d_2\right\},
\end{aligned}
\end{equation}
where
\[
 \operatorname{dist}(d,\operatorname{span}\{x^*\})
 =\left(\norm d_2^2-
       \frac{\ip{x^*}{d}^2}{\norm{x^*}_2^2}\right)^{1/2}.
\]
\end{lemma}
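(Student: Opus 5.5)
Take $d\in D_\Phi(x^*)$. The plan is to extract from the definition a step $t>0$ with $x:=x^*+td\ne0$ and $\norm{x}_1\le\Phi(x^*)\norm{x}_2$, and then to compare the $\ell_1$ mass of $x$ on $S^c$ with its distance to $\operatorname{span}\{x^*\}$. The second inclusion in \eqref{Eq_outer} is immediate, because $\operatorname{dist}(d,\operatorname{span}\{x^*\})\le\norm d_2$; the displayed formula for this distance is just the Pythagorean decomposition relative to the unit vector $x^*/\norm{x^*}_2$. So all the work is in the first inclusion.

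For the first inclusion, write $u=x^*/\norm{x^*}_2$ and split $x=x_S+x_{S^c}$, noting that $x_{S^c}=t\,d_{S^c}$. The key step will be an inequality of the form
\[
 \norm{x_{S^c}}_1\le\sqrt{|S|}\,\operatorname{dist}(x,\operatorname{span}\{x^*\}).
\]
Once this holds, the conclusion follows by dividing by $t$: we have $\operatorname{dist}(x,\operatorname{span}\{x^*\})=t\operatorname{dist}(d,\operatorname{span}\{x^*\})$, since $x^*$ itself lies in the span.

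To prove the key inequality, I would project $x$ onto the span. Let $y=\ip{x}{u}u$. Then $y$ has support contained in $S$, and $\norm{y}_1=|\ip xu|\,\Phi(x^*)$. Also set $r:=\operatorname{dist}(x,\operatorname{span}\{x^*\})=\norm{x-y}_2$, and write $\omega=\Phi(x^*)\le\sqrt{|S|}$. The triangle inequality gives
\[
 \norm{x}_1\ge\norm{y}_1-\norm{(x-y)_S}_1+\norm{x_{S^c}}_1 .
\]
On the other side, the sublevel condition together with $\norm x_2\le|\ip xu|+r$ gives $\norm x_1\le\omega|\ip xu|+\omega r$. Combining the two yields
\[
 \norm{x_{S^c}}_1\le\norm{(x-y)_S}_1+\omega r\le\sqrt{|S|}\,r+\omega r,
\]
which is off by a factor of two. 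The main obstacle is therefore to sharpen this estimate to $\sqrt{|S|}\,r$.

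For the sharpening I would use the Pythagorean identity rather than the triangle inequality. Write $a=\ip xu$, $v=(x-y)_S$, and $w=x_{S^c}$, so that $r^2=\norm v_2^2+\norm w_2^2$ and $\norm x_2=(a^2+r^2)^{1/2}$. I would first try the exact identity $\ip{\sgn(x^*_S)}{u_S}=\omega$, which gives $\norm{x_S}_1\ge\ip{\sgn(x^*_S)}{x_S}=a\,\omega+\ip{\sgn(x^*_S)}{v}$, valid for $a\ge0$, possibly after replacing $d$ by a suitable sign. Since $v\perp u$, the quantity $\ip{\sgn(x^*_S)}{v}$ is bounded in absolute value by $\norm v_2\,(|S|-\omega^2)^{1/2}$, because $\sgn(x^*_S)$ has component $\omega$ along $u$ and orthogonal component of norm $(|S|-\omega^2)^{1/2}$. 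The sublevel condition $\norm{x_S}_1+\norm w_1\le\omega(a^2+r^2)^{1/2}\le\omega a+\omega r^2/(2a)$ is too weak for small $a$, so instead I would use $\omega(a^2+r^2)^{1/2}\le\omega a+\omega r$ together with the bound above to get
\[
 \norm w_1\le\bigl((|S|-\omega^2)^{1/2}+\omega\bigr)r .
\]
This is at most $\sqrt{2|S|}\,r$, which is still not $\sqrt{|S|}\,r$. Hence I expect the hardest point to be the exact optimization of $\norm w_1$ subject to the constraint, over the pair $(\norm v_2,\norm w_2)$ with $\norm v_2^2+\norm w_2^2=r^2$, combined with the bound $\norm w_1\le\sqrt{n-|S|}\norm w_2$ or a more careful case analysis on the sign of $a$ (the case $a\le0$ should be the easy one, since there the $S$-part contributes little). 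If this direct optimization fails, my fallback is to reduce via Lemma~\ref{Prop_descent} to the candidate points $x^*+t_id$ and $d$, where the structure is simpler.
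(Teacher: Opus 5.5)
Your reduction is sound. The second inclusion, the distance formula, and the scaling $\operatorname{dist}(x,\operatorname{span}\{x^*\})=t\operatorname{dist}(d,\operatorname{span}\{x^*\})$ are correct. The key inequality
\[
\norm{x_{S^c}}_1\le\sqrt{|S|}\,r,\qquad r=\operatorname{dist}(x,\operatorname{span}\{x^*\}),
\]
for $x=x^*+td\ne0$ with $\norm{x}_1\le\omega\norm{x}_2$, is exactly what is needed. However, you never prove it, and it is the entire content of the lemma.

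Both of your estimates pass through the step $\omega\norm{x}_2\le\omega|a|+\omega r$. That is, they compare $x$ with its projection $au$ onto $\operatorname{span}\{x^*\}$, whose $\ell_1$ norm $\omega|a|$ can fall short of $\omega\norm{x}_2$ by nearly $\omega r$. That shortfall is precisely the surplus in your bounds $(\sqrt{|S|}+\omega)r$ and $((|S|-\omega^2)^{1/2}+\omega)r$.

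Once that step is taken, optimizing over $(\norm{v}_2,\norm{w}_2)$ cannot remove the surplus without invoking $\norm{w}_1\le\sqrt{n-|S|}\norm{w}_2$, which brings in the dimension. Your fallback to Lemma~\ref{Prop_descent} only moves the same inequality to the points $x^*+t_id$ and $d$. Also, $D_\Phi(x^*)$ is not symmetric, so you cannot replace $d$ by $-d$. You could replace $x$ by $-x$, but the correct argument needs no sign normalization anyway.

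The missing idea is to reverse the roles of the two vectors. Compare $\norm{x}_2u$, whose $\ell_1$ norm is exactly $\omega\norm{x}_2$, with its projection $\alpha x$ onto $\operatorname{span}\{x\}$, where $\alpha=a/\norm{x}_2\in[-1,1]$. Since $u$ is supported on $S$,
\[
\norm{x_{S^c}}_1\le\omega\norm{x}_2-\norm{x_S}_1
\le\norm{(\norm{x}_2u-\alpha x)_S}_1-(1-|\alpha|)\norm{x_S}_1
\le\sqrt{|S|}\,\norm{\norm{x}_2u-\alpha x}_2 .
\]
Moreover, $\norm{\norm{x}_2u-\alpha x}_2^2=\norm{x}_2^2-a^2=r^2$.

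The defect $(1-|\alpha|)\norm{x_S}_1$ now has the favorable sign and is simply dropped, with no case analysis on $a$. This is the paper's proof in normalized form: with $v=x/\norm{x}_2$, it uses $\norm{v}_1\le\norm{u}_1$, the splitting $u_S=(u-\alpha v)_S+\alpha v_S$, and the identity $\norm{u-\alpha v}_2=\norm{v-\alpha u}_2=\sqrt{1-\alpha^2}$.
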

\begin{proof}
Fix any $d\!\in\! D_\Phi(x^*)$. Choose $\tau\!>\!0$ such that
$y\!=\!x^*+\tau d\!\ne\!0$ and $\Phi(y)\!\le\!\Phi(x^*)$, and set
\[
 u=\frac{x^*}{\norm{x^*}_2},\qquad
 v=\frac{y}{\norm y_2},\qquad \alpha=\ip uv.
\]
Then $\norm u_2=\norm v_2=1$, $|\alpha|\le1$, and
$u_{S^c}=0$. Since
\[
 \norm v_1=\frac{\norm y_1}{\norm y_2}
 =\Phi(y)\le\Phi(x^*)
 =\norm u_1,
\]
it follows that
\begin{equation}\label{Eq_outer_support_gap}
 \norm{v_{S^c}}_1
 =\norm v_1-\norm{v_S}_1
 \le\norm{u_S}_1-\norm{v_S}_1.
\end{equation}

Using $u_S=(u-\alpha v)_S+\alpha v_S$, the triangle inequality gives
\[
 \norm{u_S}_1
 \le\norm{(u-\alpha v)_S}_1+|\alpha|\norm{v_S}_1.
\]
Substituting this bound into \eqref{Eq_outer_support_gap}, using
$|\alpha|\le1$, and applying the Cauchy--Schwarz inequality yields
\begin{equation}\label{Eq_outer_normalized_bound}
 \norm{v_{S^c}}_1
 \le\norm{(u-\alpha v)_S}_1
 \le\sqrt{|S|}\norm{u-\alpha v}_2.
\end{equation}
Here $\norm{(u-\alpha v)_S}_2\le\norm{u-\alpha v}_2$.
Also,
\[
 \norm{u-\alpha v}_2^2
 =1-\alpha^2
 =\norm{v-\alpha u}_2^2.
\]
Since $\alpha u$ is the orthogonal projection of $v$ onto
$\operatorname{span}\{x^*\}$, it follows that
\begin{equation}\label{Eq_outer_projection_distance}
 \norm y_2\norm{u-\alpha v}_2
 =\operatorname{dist}(y,\operatorname{span}\{x^*\})
 =\tau\operatorname{dist}(d,\operatorname{span}\{x^*\}).
\end{equation}
The last equality follows from $y=x^*+\tau d$ and
$x^*\in\operatorname{span}\{x^*\}$.
Also, $y_{S^c}=\tau d_{S^c}$ gives
$\norm y_2\norm{v_{S^c}}_1=\tau\norm{d_{S^c}}_1$.
Multiplying \eqref{Eq_outer_normalized_bound} by $\norm y_2/\tau$
and using \eqref{Eq_outer_projection_distance} therefore yields
\[
 \norm{d_{S^c}}_1
 \le\sqrt{|S|}\operatorname{dist}(d,\operatorname{span}\{x^*\}).
\]
This proves the first inclusion in \eqref{Eq_outer}; the second
follows from
$\operatorname{dist}(d,\operatorname{span}\{x^*\})\le\norm d_2$.
\end{proof}
Both outer cones in \eqref{Eq_outer} are closed and symmetric and
contain $\cl D_\Phi(x^*)$. The first depends on the direction of
$x^*$; the second depends only on its support and will give the
uniform condition in Theorem~\ref{Th_uniform_global}.

\section{Exact Recovery and Stability at a Fixed Signal}\label{Sec_pointwise}
Fix a signal $x^*\in\Delta$. The directional and tangent-cone
formulas of Section~\ref{Sec_geometry}, together with the curvature
of zero-slope feasible rays, yield equivalent criteria for local
exact recovery, first-order growth, and tilt stability. The global
descent cone then characterizes unique global recovery, and its
explicit description gives a test using coordinate-vanishing steps
and an asymptotic comparison. The resulting pointwise criteria are
the starting point for the uniform estimates in
Section~\ref{Sec_uniform}.

\subsection{Local Exact Recovery and Stability}
A signal $x^*\in\Delta$ is locally exactly recovered if it uniquely
minimizes $\Phi$ over the feasible points in some neighborhood of
$x^*$. Equivalently, $x^*$ is a \emph{strict local minimizer} of
$\Phi$ on $\Delta$; that is, there exists $\rho>0$ such that
$\Phi(x)>\Phi(x^*)$ for every
$x\in\Delta$ with $0<\norm{x-x^*}_2<\rho$. A quantitative version requires the objective gap to bound the
distance to $x^*$ from below by a positive linear term.

\begin{definition}
\label{Def_sharp}
Let $x^*\in\Delta$. The point $x^*$ is a \emph{sharp local minimizer}
of $\Phi$ on $\Delta$ if there exist constants $\kappa>0$ and
$\rho>0$ such that
\begin{equation}\label{Eq_growth}
 \Phi(x)\ge\Phi(x^*)+\kappa\norm{x-x^*}_2
 \qquad\text{for all }x\in\Delta\text{ with }\norm{x-x^*}_2\le\rho.
\end{equation}
In this case, $\Phi$ is said to have \emph{first-order growth} at
$x^*$ relative to $\Delta$, and $\kappa$ is called a local growth
modulus.
\end{definition}

Sharp local minima are strict. The converse fails in general but
holds for this model, as shown in Proposition~\ref{Th_local}.

To study recovery through objective levels, as in
\cite[Theorem 3.1]{LPB}, define the \emph{equality-level mapping}
\begin{equation}\label{Eq_level_mapping}
 \mathcal E(\tau):=\{x\in\Delta:\Phi(x)=\Phi(x^*)+\tau\},
 \qquad \tau\in\R.
\end{equation}
Thus $\mathcal E(\tau)$ consists of feasible points with objective
gap $\tau$. First-order growth will be related to isolated
calmness of this mapping, defined below.

\begin{definition}\label{Def_calmness}
Let $P$ be a finite-dimensional Euclidean parameter space,
$M:P\rightrightarrows\R^n$ a set-valued mapping, and
$\overline{x}\in M(\bar p)$. The mapping $M$ is \emph{isolated calm at
$\bar p$ for $\overline{x}$} if there exist a neighborhood $U$ of $\bar p$,
a neighborhood $V$ of $\overline{x}$, and a constant $\kappa>0$ such that
\[
 M(p)\cap V\subseteq\{\overline{x}\}+\kappa\norm{p-\bar p}_2\mathbb B
 \qquad(p\in U).
\]
In particular, $M(\bar p)\cap V=\{\overline{x}\}$. Perturbed values
$M(p)\cap V$ may be empty or multivalued.
\end{definition}

The following characterization is due to King and Rockafellar
\cite[Proposition 2.1]{KR}; see also \cite[Lemma 2.1]{LPB}.

\begin{lemma}\label{Lem_calmness}
Let $P$ be a finite-dimensional Euclidean space, let
$M:P\rightrightarrows\R^n$, and let $(\bar p,\overline{x})\in\gph M$.
Then $M$ is isolated calm at $\bar p$ for $\overline{x}$ if and only if
\[
 DM(\bar p\mid \overline{x})(0)=\{0\}.
\]
\end{lemma}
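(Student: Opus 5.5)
To prove Lemma~\ref{Lem_calmness}, I would argue both directions directly from the definition of the contingent tangent cone to $\gph M$ at $(\bar p,\overline{x})$. Recall that $d\in DM(\bar p\mid\overline{x})(0)$ exactly when there exist $t_k\downarrow0$, $u_k\to0$ and $d_k\to d$ with $\overline{x}+t_kd_k\in M(\bar p+t_ku_k)$. Note that $0$ always belongs to this set, because $(\bar p,\overline{x})\in\gph M$ and $0$ lies in every tangent cone. So the task is to show that isolated calmness holds if and only if no nonzero $d$ is admissible.

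\emph{Necessity.} Assume $M$ is isolated calm with data $U,V,\kappa$, and take $d\in DM(\bar p\mid\overline{x})(0)$ with sequences as above. For large $k$ we have $\bar p+t_ku_k\in U$ and $\overline{x}+t_kd_k\in V$. The calmness inclusion then gives
\[
 t_k\norm{d_k}_2\le\kappa t_k\norm{u_k}_2 .
\]
Dividing by $t_k$ yields $\norm{d_k}_2\le\kappa\norm{u_k}_2\to0$, so $d=0$.

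\emph{Sufficiency.} I would argue by contradiction. Suppose calmness fails for every choice of neighborhoods and constant. Then, taking $U$ and $V$ to be balls of radius $1/k$ and $\kappa=k$, there are points $p_k\to\bar p$ and $x_k\in M(p_k)$ with $x_k\to\overline{x}$ and
\[
 \norm{x_k-\overline{x}}_2>k\norm{p_k-\bar p}_2 .
\]
In particular $x_k\ne\overline{x}$. Set $t_k:=\norm{x_k-\overline{x}}_2\downarrow0$ (after passing to a subsequence), $d_k:=(x_k-\overline{x})/t_k$, and $u_k:=(p_k-\bar p)/t_k$. Then $\norm{u_k}_2<1/k\to0$. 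By compactness of the unit sphere, a subsequence of $d_k$ converges to some $d$ with $\norm d_2=1$. Since $\overline{x}+t_kd_k=x_k\in M(\bar p+t_ku_k)$, the definition gives $(0,d)\in\mathcal T_{\gph M}(\bar p,\overline{x})$, that is, $d\in DM(\bar p\mid\overline{x})(0)$. This contradicts $DM(\bar p\mid\overline{x})(0)=\{0\}$.

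The only delicate point is the negation of calmness. Because the neighborhoods and the constant are all quantified existentially, failure means every triple $(U,V,\kappa)$ admits a violating pair. Choosing shrinking balls together with $\kappa=k$ therefore produces the required sequence. The case $x_k=\overline{x}$ is excluded automatically by the strict inequality above. Finite dimensionality is used only for the compactness extraction of $d$.
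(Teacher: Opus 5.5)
Your proof is correct. Necessity follows from dividing the calmness inclusion by $t_k$, and sufficiency is the standard contradiction argument: shrinking balls with $\kappa=k$, normalization by $t_k=\norm{x_k-\overline{x}}_2$, and compactness of the unit sphere in $\R^n$. The paper gives no proof of its own and quotes the result from King and Rockafellar~\cite{KR}, and your argument is essentially the standard proof behind that cited result. As your argument shows, only the finite dimensionality of $\R^n$ is used, not that of $P$.
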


The next theorem relates first-order growth to directional and
geometric conditions and to isolated calmness of $\mathcal E$.

\begin{theorem}\label{Th_growth}
Let $x^*\in\Delta$, and let $C$ and $\mathcal E$ be defined by
\eqref{Eq_sublevel} and \eqref{Eq_level_mapping}, respectively.
Then the following statements are equivalent:
\begin{itemize}
\item[(i)] $x^*$ is a sharp local minimizer of $\Phi$ on $\Delta$.
\item[(ii)] $\ker(A)\cap
\{d\in\R^n:\Phi'(x^*;d)\le0\}=\{0\}$.
\item[(iii)] $\norm{d_{S^c}}_1>
\left|\ip{\sgn(x^*_S)}{d_S}
-\frac{\norm{x^*}_1}{\norm{x^*}_2^2}\ip{x^*}{d}\right|$ for every
$d\in\ker(A)\setminus\{0\}$.
\item[(iv)] $\ker(A)\cap \mathcal{T}_C(x^*)=\{0\}$.
\item[(v)] $x^*$ is a local minimizer of $\Phi$ on $\Delta$, and
$\mathcal E$ is isolated calm at $0$ for $x^*$
(in the sense of Definition~\ref{Def_calmness}).
\end{itemize}
If $\ker(A)\ne\{0\}$, these conditions are also equivalent to
\begin{equation}\label{Eq_alpha}
 \alpha:=\min_{d\in\ker(A),\ \norm d_2=1}\Phi'(x^*;d)>0.
\end{equation}
Any $0<\kappa<\alpha$ is a local growth modulus for a sufficiently
small neighborhood of $x^*$.
\end{theorem}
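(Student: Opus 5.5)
The plan is to organize the equivalences around (ii), which serves as the hub. First, (ii)$\Leftrightarrow$(iv) is immediate from Lemma~\ref{Prop_localcones}, since $\mathcal T_C(x^*)=\{d:\Phi'(x^*;d)\le0\}$. Next, (ii)$\Leftrightarrow$(iii) follows the symmetrization argument of Proposition~\ref{Prop_stationary}. Write $\ell(d):=\ip{\sgn(x^*_S)}{d_S}-\frac{\norm{x^*}_1}{\norm{x^*}_2^2}\ip{x^*}{d}$, which is linear, so that $\norm{x^*}_2\Phi'(x^*;d)=\ell(d)+\norm{d_{S^c}}_1$.
\begin{itemize}
\item If (ii) holds, then for $d\in\ker(A)\setminus\{0\}$ both $d$ and $-d$ give $\ell(\pm d)+\norm{d_{S^c}}_1>0$, which is (iii).
\item Conversely, (iii) gives $\ell(d)+\norm{d_{S^c}}_1>0$ for every nonzero kernel direction.
\end{itemize}
When $\ker(A)\ne\{0\}$, condition (ii) is equivalent to $\alpha>0$. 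The minimum in \eqref{Eq_alpha} is attained, because $\Phi'(x^*;\cdot)$ is continuous (Lemma~\ref{Prop_derivative}) and the unit sphere of $\ker(A)$ is compact. By positive homogeneity, $\alpha>0$ holds exactly when $\Phi'(x^*;d)>0$ for every nonzero $d\in\ker(A)$. When $\ker(A)=\{0\}$ all statements are trivially true, since $\Delta=\{x^*\}$.

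For (ii)$\Rightarrow$(i) with the stated modulus, I use the expansion \eqref{Eq_expansion}. For $x=x^*+d\in\Delta$ we have $d\in\ker(A)$, so
\[
\Phi(x)-\Phi(x^*)\ge\Phi'(x^*;d)-K\norm d_2^2\ge \alpha\norm d_2-K\norm d_2^2 .
\]
Given $0<\kappa<\alpha$, choosing $\rho\le(\alpha-\kappa)/K$ gives \eqref{Eq_growth}. For (i)$\Rightarrow$(ii), suppose $d\in\ker(A)$ is nonzero with $\Phi'(x^*;d)\le0$. Taking $x=x^*+td$ in \eqref{Eq_growth} and letting $t\downarrow0$ gives $\Phi'(x^*;d)\ge\kappa\norm d_2>0$, a contradiction. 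This direction does not even need Remark~\ref{Rem_feasible_curvature}.

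Finally, I tie (v) to the others through Lemma~\ref{Lem_calmness}. I first prove (i)$\Rightarrow$(v). Local minimality is clear. For calmness, if $x\in\mathcal E(\tau)$ lies near $x^*$, then $\tau=\Phi(x)-\Phi(x^*)\ge\kappa\norm{x-x^*}_2$. Hence $x\in\{x^*\}+\kappa^{-1}|\tau|\mathbb B$, which is isolated calmness with constant $\kappa^{-1}$, and no graphical derivative is needed.

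For (v)$\Rightarrow$(ii), which I expect to be the main obstacle, the plan is to argue by contradiction. Suppose $0\ne d\in\ker(A)$ with $\Phi'(x^*;d)\le0$. Local minimality forces $\Phi'(x^*;d)\ge0$ along feasible rays: if it were negative, $\Phi(x^*+td)<\Phi(x^*)$ for small $t$. So $\Phi'(x^*;d)=0$. But then Remark~\ref{Rem_feasible_curvature} gives $d\notin\operatorname{span}\{x^*\}$ and $\Phi''(x^*;d)<0$. By \eqref{Eq_curvature}, $\Phi(x^*+td)<\Phi(x^*)$ for small $t>0$, again contradicting local minimality. Notably, calmness is then not actually used, only local minimality. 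This shows that (v), and indeed local minimality alone, implies (ii), consistent with the paper's remark that strict and sharp minima coincide here.

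If I wanted to use Lemma~\ref{Lem_calmness} genuinely instead, I would show $D\mathcal E(0\mid x^*)(0)$ contains every $d\in\ker(A)$ with $\Phi'(x^*;d)=0$. This follows because $(\Phi(x^*+td)-\Phi(x^*))/t\to0$, so $(0,d)\in\mathcal T_{\gph\mathcal E}(0,x^*)$. Calmness would then force $d=0$, and local minimality would exclude $\Phi'<0$. I would present this version as the primary argument for (v)$\Rightarrow$(ii).
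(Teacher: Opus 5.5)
Your proof is correct. It is organized differently from the paper's, but the difference matters only in how (v) is handled.

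The paper runs a single cycle (i)$\Rightarrow$(ii)$\Rightarrow$(iii)$\Rightarrow$(iv)$\Rightarrow$(v)$\Rightarrow$(i). Your hub-at-(ii) steps for (i), (iii), (iv) and $\alpha>0$ match its steps. One small detail: $\rho$ must also lie below the radius on which the $O(\norm d_2^2)$ term in \eqref{Eq_expansion} is bounded by $K\norm d_2^2$.

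For (v)$\Rightarrow$(i), the paper uses the calmness inclusion directly. By continuity and local minimality, every feasible $x$ near $x^*$ has $\tau=\Phi(x)-\Phi(x^*)\in[0,\varepsilon)$. Hence $x\in\mathcal E(\tau)\cap(x^*+\delta\mathbb B)$, and calmness gives $\norm{x-x^*}_2\le L\tau$, which is growth with modulus $1/L$. This argument is short and model-free (it needs only continuity at $x^*$), and calmness is genuinely used.

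Your first version discards calmness. It derives (ii) from local minimality together with the negative curvature of zero-slope feasible rays (Remark~\ref{Rem_feasible_curvature}). This is exactly the paper's later argument for (i)$\Rightarrow$(iii) in Proposition~\ref{Th_local}. It proves the stronger fact that (v) reduces to local minimality, but it imports the $b\ne0$ curvature property into this theorem and leaves the calmness hypothesis idle.

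Your second version goes through Lemma~\ref{Lem_calmness} and the inclusion $\{d\in\ker(A):\Phi'(x^*;d)=0\}\subseteq D\mathcal E(0\mid x^*)(0)$. It is also valid, and only the easy direction of the King--Rockafellar criterion is needed. It mirrors Remark~\ref{Rem_cones}: calmness rules out zero slopes and local minimality rules out negative ones. It explains the mechanism well, but it uses heavier machinery than the paper's direct two-line argument.
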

\begin{proof}
If $\ker(A)=\{0\}$, then $\Delta=\{x^*\}$ and all five
conditions hold. Suppose henceforth that $\ker(A)\ne\{0\}$.
The equivalence follows from the cycle of implications
\[
 \text{(i)}\ \Longrightarrow\ \text{(ii)}\ \Longrightarrow\
 \text{(iii)}\ \Longrightarrow\ \text{(iv)}\ \Longrightarrow\
 \text{(v)}\ \Longrightarrow\ \text{(i)}.
\]

\emph{(i) $\Longrightarrow$ (ii).}
Let $\kappa>0$ be a local growth modulus. For any $d\in\ker(A)$,
applying \eqref{Eq_growth} to $x^*+td$ for sufficiently small
$t>0$, dividing by $t$, and letting $t\downarrow0$ gives
\[
 \Phi'(x^*;d)\ge\kappa\norm d_2.
\]
Thus the directional derivative is positive for every nonzero
$d\in\ker(A)$, proving (ii).

\emph{(ii) $\Longrightarrow$ (iii).}
For any nonzero $d\in\ker(A)$, condition (ii) and
\eqref{Eq_derivative} give
\[
 \ip{\sgn(x^*_S)}{d_S}
 -\frac{\norm{x^*}_1}{\norm{x^*}_2^2}\ip{x^*}{d}
 +\norm{d_{S^c}}_1>0.
\]
Applying this inequality to $d$ and $-d$ bounds the linear expression
strictly between $-\norm{d_{S^c}}_1$ and $\norm{d_{S^c}}_1$,
which is precisely (iii).

\emph{(iii) $\Longrightarrow$ (iv).}
Condition (iii) and \eqref{Eq_derivative} give
$\Phi'(x^*;d)>0$ for every nonzero $d\in\ker(A)$.
Lemma~\ref{Prop_localcones} identifies
$\mathcal T_C(x^*)$ with $\{d:\Phi'(x^*;d)\le0\}$,
so its intersection with $\ker(A)$ is $\{0\}$.

\emph{(iv) $\Longrightarrow$ (v).}
By Lemma~\ref{Prop_localcones}, (iv) implies strict positivity of
$\Phi'(x^*;d)$ on the unit sphere of $\ker(A)$. Continuity and
compactness give $\alpha>0$ in \eqref{Eq_alpha}.
For $d\in\ker(A)$ tending to zero, positive homogeneity and
\eqref{Eq_expansion} yield
\[
 \Phi(x^*+d)-\Phi(x^*)
 \ge\alpha\norm d_2+O(\norm d_2^2).
\]
For any $0<\kappa<\alpha$, there is a radius $\rho>0$ such that,
for every $x\in\Delta$ with $\norm{x-x^*}_2\le\rho$,
\[
 \Phi(x)-\Phi(x^*)\ge\kappa\norm{x-x^*}_2.
\]
In particular, $x^*$ is a strict local minimizer. For every
$x\in\mathcal E(\tau)$ in this neighborhood,
\[
 \norm{x-x^*}_2\le\kappa^{-1}\tau
 \le\kappa^{-1}|\tau|.
\]
Hence $\mathcal E$ is isolated calm at $0$ for $x^*$, proving (v).
This argument also establishes the asserted growth moduli.

\emph{(v) $\Longrightarrow$ (i).}
By Definition~\ref{Def_calmness}, there exist constants
$\varepsilon>0$, $\delta>0$, and $L>0$ such that, for every
$\tau\in\R$ with $|\tau|<\varepsilon$,
\[
 \mathcal E(\tau)\cap(x^*+\delta\mathbb B)
 \subseteq x^*+L|\tau|\mathbb B.
\]
Continuity of $\Phi$ at $x^*$ and local minimality yield a radius
$0<\rho\le\delta$ such that, for every $x\in\Delta$ with
$\norm{x-x^*}_2\le\rho$,
\[
 0\le\Phi(x)-\Phi(x^*)<\varepsilon.
\]
For any such $x$, set $\tau=\Phi(x)-\Phi(x^*)$. Then
$x\in\mathcal E(\tau)\cap(x^*+\delta\mathbb B)$, and the
inclusion above gives
\[
 \norm{x-x^*}_2\le L|\tau|
 =L\bigl(\Phi(x)-\Phi(x^*)\bigr).
\]
Rearranging proves \eqref{Eq_growth} with modulus $1/L$ and radius
$\rho$, establishing (i).

Finally, by continuity and positive homogeneity of
$\Phi'(x^*;\cdot)$, condition (ii) is equivalent to
$\alpha>0$ in \eqref{Eq_alpha}.
\end{proof}

\begin{remark}\label{Rem_cones}
The local minimality assumption in Theorem~\ref{Th_growth}(v)
is necessary: isolated calmness of $\mathcal E$ alone does not imply
local exact recovery. In fact, for every $u\in\R$,
\[
 D\mathcal E(0\mid x^*)(u)
 =\{d\in\ker(A):\Phi'(x^*;d)=u\}.
\]
Indeed, if $d\in D\mathcal E(0\mid x^*)(u)$, there exist
$t_k\downarrow0$ and $(u_k,d_k)\to(u,d)$ such that
$(t_ku_k,x^*+t_kd_k)\in\gph\mathcal E$.
Feasibility gives $Ad_k=0$ and hence $Ad=0$.
The definition of $\mathcal E$ also gives
\[
 \Phi(x^*+t_kd_k)=\Phi(x^*)+t_ku_k.
\]
Rearranging, dividing by $t_k>0$, and applying
\eqref{Eq_expansion} gives
\[
 u_k=\frac{\Phi(x^*+t_kd_k)-\Phi(x^*)}{t_k}
 =\Phi'(x^*;d_k)+O(t_k\norm{d_k}_2^2)\longrightarrow\Phi'(x^*;d).
\]
Conversely, if $d\in\ker(A)$ and $\Phi'(x^*;d)=u$, set
$x_t=x^*+td$ and $\tau_t=\Phi(x_t)-\Phi(x^*)$ for sufficiently
small $t>0$. Then $(\tau_t,x_t)\in\gph\mathcal E$ and
\[
 \left(\frac{\tau_t}{t},\frac{x_t-x^*}{t}\right)
 \longrightarrow(u,d),
\]
which proves the reverse inclusion. Lemma~\ref{Lem_calmness}
therefore gives
\[
 \mathcal E\text{ is isolated calm at }0\text{ for }x^*
 \quad\Longleftrightarrow\quad
 \ker(A)\cap\{d\in\mathbb R^n:\Phi'(x^*;d)=0\}=\{0\}.
\]
This condition excludes zero slopes but allows negative ones.
Local minimality rules out negative slopes, and the two conditions
then imply first-order growth by Theorem~\ref{Th_growth}.
In \cite[Theorem 3.1]{LPB}, membership in the global optimal level
already ensures minimality; Theorem~\ref{Th_growth}(v) includes
local minimality explicitly.

For example, let $A=(1\ \ 0)$, $b=2$, and $x^*=(2,1)$.
For any feasible direction $d=(0,v)$, \eqref{Eq_derivative} gives
\[
 \Phi'(x^*;d)=\frac{2v}{5\sqrt5}.
\]
The derivative vanishes only when $d=0$, so $\mathcal E$ is
isolated calm at $0$ for $x^*$. However, it is negative for $v<0$,
and hence $x^*$ is not a local minimizer.
\end{remark}

Under linear objective perturbations, tilt stability requires
the localized minimizer to be unique and to depend Lipschitz
continuously on the perturbation.

\begin{definition}\label{Def_tilt}
A point $x^*\in\Delta$ is a \emph{tilt-stable local minimizer} of $f$
if there exist a radius $\rho>0$ and a neighborhood $U$ of zero
in $\R^n$ such that the localized minimizer mapping
\begin{equation}\label{Eq_tilt}
 T_\rho(v)=\argmin_{\norm{x-x^*}_2\le\rho}
          \{f(x)-\ip v{x-x^*}\}
\end{equation}
has exactly one element for every $v\in U$, depends Lipschitz
continuously on $v$, and satisfies $T_\rho(0)=\{x^*\}$.
Each singleton $T_\rho(v)$ is identified with its unique element.
This is the notion introduced by Poliquin and Rockafellar~\cite{PR}.
\end{definition}

\begin{proposition}\label{Th_local}
Let $x^*\in\Delta$. Then the following
statements are equivalent:
\begin{itemize}
\item[(i)] $x^*$ is a local minimizer of $\Phi$ on $\Delta$.
\item[(ii)] $x^*$ is a strict local minimizer of $\Phi$ on $\Delta$.
\item[(iii)] $x^*$ is a sharp local minimizer of $\Phi$ on $\Delta$.
\item[(iv)] $x^*$ is a tilt-stable local minimizer of $f$.
\end{itemize}
These properties are also equivalent to each condition in
Theorem~\ref{Th_growth}. Moreover, if $\kappa>0$ and $\rho>0$
satisfy \eqref{Eq_growth}, then the mapping in \eqref{Eq_tilt}
satisfies
\begin{equation}\label{Eq_fixed}
 T_\rho(v)=\{x^*\}\qquad\text{for every }\norm v_2<\kappa.
\end{equation}
\end{proposition}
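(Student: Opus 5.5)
The plan is to prove the chain (iii) $\Rightarrow$ (ii) $\Rightarrow$ (i) $\Rightarrow$ (iii), which settles the first three items, and then to attach tilt stability (iv) to it. The implications (iii) $\Rightarrow$ (ii) $\Rightarrow$ (i) are immediate from the definitions. Once (iii) is obtained, Theorem~\ref{Th_growth} gives equivalence with each of its conditions.

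\emph{(i) $\Rightarrow$ (iii).} If $\ker(A)=\{0\}$ there is nothing to prove, so assume $\ker(A)\ne\{0\}$. Let $x^*$ be a local minimizer and fix $d\in\ker(A)$. For small $t>0$ we have $\Phi(x^*+td)\ge\Phi(x^*)$. Dividing by $t$ and letting $t\downarrow0$ gives $\Phi'(x^*;d)\ge0$, which is Proposition~\ref{Prop_stationary}(ii).

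It remains to exclude $\Phi'(x^*;d)=0$ for nonzero $d\in\ker(A)$. By Remark~\ref{Rem_feasible_curvature}, such a $d$ is not parallel to $x^*$, and Lemma~\ref{Lem_ray} then gives $\Phi''(x^*;d)<0$. Hence $\Phi(x^*+td)<\Phi(x^*)$ for all small $t>0$, contradicting local minimality. So $\Phi'(x^*;d)>0$ on $\ker(A)\setminus\{0\}$, which is Theorem~\ref{Th_growth}(ii), and that theorem yields (iii).

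This ray-curvature step is the conceptual heart of the argument. It is the only place where nonsmooth first-order nonnegativity is upgraded to strict positivity, and it relies on $b\ne0$.

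\emph{(iii) $\Rightarrow$ (iv) and the bound \eqref{Eq_fixed}.} Let $\kappa,\rho$ satisfy \eqref{Eq_growth} and take $\norm v_2<\kappa$. For $x\in\Delta$ with $\norm{x-x^*}_2\le\rho$ and $x\ne x^*$,
\[
 f(x)-\ip v{x-x^*}\ge\Phi(x^*)+(\kappa-\norm v_2)\norm{x-x^*}_2>f(x^*).
\]
Points outside $\Delta$, as well as $x=0$, have $f=+\infty$. Hence $T_\rho(v)=\{x^*\}$, which is \eqref{Eq_fixed}. Since this constant map is trivially Lipschitz, (iv) follows with $U=\{v:\norm v_2<\kappa\}$.

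\emph{(iv) $\Rightarrow$ (i).} Since $T_\rho(0)=\{x^*\}$, the point $x^*$ minimizes $f$ over the ball of radius $\rho$. Because $f=\Phi$ on $\Delta$, this means $x^*$ is a local minimizer of $\Phi$ on $\Delta$.

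The only delicacy in these last two steps is bookkeeping: $f(0)=+\infty$, and $0\notin\Delta$ because $b\ne0$, so the case $x=0$ is harmless. Since \eqref{Eq_fixed} is just the growth inequality combined with Cauchy--Schwarz, I expect no real obstacle beyond the curvature step above.
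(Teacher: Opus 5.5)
Your proof is correct and follows essentially the same route as the paper. It uses the trivial implications (iii) $\Rightarrow$ (ii) $\Rightarrow$ (i); the upgrade of $\Phi'(x^*;d)\ge0$ to strict positivity via the negative ray curvature of Remark~\ref{Rem_feasible_curvature} and Lemma~\ref{Lem_ray}, followed by Theorem~\ref{Th_growth}; the growth-plus-Cauchy--Schwarz argument showing the tilted localized minimizer stays at $x^*$ for $\norm v_2<\kappa$; and $T_\rho(0)=\{x^*\}$ to return to (i).
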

\begin{proof}
The implications (iii) $\Longrightarrow$ (ii) $\Longrightarrow$ (i)
follow directly from the definitions. It remains to prove
(i) $\Longrightarrow$ (iii) $\Longrightarrow$ (iv)
$\Longrightarrow$ (i).

\emph{(i) $\Longrightarrow$ (iii).}
Suppose that $x^*$ is a local minimizer. For any $d\in\ker(A)$,
the points $x^*+td$ are feasible, so local minimality gives
$\Phi'(x^*;d)\ge0$.

Suppose that $\Phi'(x^*;d)=0$ for some nonzero $d\in\ker(A)$.
Remark~\ref{Rem_feasible_curvature} gives $\Phi''(x^*;d)<0$, so
\[
 \Phi(x^*+td)-\Phi(x^*)
 =\frac12\Phi''(x^*;d)t^2+o(t^2)<0
\]
for all sufficiently small $t>0$, contradicting local minimality.
Hence $\Phi'(x^*;d)>0$ for every nonzero $d\in\ker(A)$.
Theorem~\ref{Th_growth} therefore gives first-order growth,
which is precisely (iii).

\emph{(iii) $\Longrightarrow$ (iv).}
Let $\kappa>0$ and $\rho>0$ satisfy \eqref{Eq_growth}.
For any $v\in\R^n$ with $\norm v_2<\kappa$ and any $x\in\Delta$
with $\norm{x-x^*}_2\le\rho$, first-order growth and
the Cauchy--Schwarz inequality give
\[
\begin{aligned}
 f(x)-\ip{v}{x-x^*}-f(x^*)
 &\ge\kappa\norm{x-x^*}_2-\ip{v}{x-x^*}\\
 &\ge(\kappa-\norm v_2)\norm{x-x^*}_2.
\end{aligned}
\]
For $x\ne x^*$ the right-hand side is positive, and infeasible
points have infinite objective. This proves \eqref{Eq_fixed}.
Since $T_\rho$ is constant for $\norm v_2<\kappa$, it is Lipschitz
continuous there, proving (iv).

\emph{(iv) $\Longrightarrow$ (i).}
Tilt stability gives $T_\rho(0)=\{x^*\}$ for some $\rho>0$,
so $x^*$ is a local minimizer of $\Phi$ on $\Delta$.
The remaining equivalences follow from Theorem~\ref{Th_growth}.
\end{proof}

\begin{remark}\label{Rem_local_LP}
The local recovery criterion admits a formulation in terms of linear
programming feasibility. Fix $x^*\in\Delta$ and set
$\gamma:=\norm{x^*}_1/\norm{x^*}_2^2$.
By Theorem~\ref{Th_growth} and Proposition~\ref{Th_local}, $x^*$
is locally exactly recovered if and only if, for every
$i\in\{1,\ldots,n\}$ and $\sigma\in\{-1,1\}$, the following system
in $(d,w)\in\R^n\times\R^{|S^c|}$ is infeasible:
\[
\begin{cases}
Ad=0,\qquad d_i=\sigma,\\
\displaystyle
\ip{\sgn(x^*_S)}{d_S}
+\sum_{j\in S^c}w_j-\gamma\ip{x^*}{d}\le0,\\
-w_j\le d_j\le w_j,\qquad j\in S^c.
\end{cases}
\]
Here $w=(w_j)_{j\in S^c}$.
Indeed, the inequalities involving $w$ give an exact representation
of the condition $\Phi'(x^*;d)\le0$ after eliminating $w$.
Since this condition and $Ad=0$ define a cone, any nonzero vector
in that cone can be positively scaled so that one coordinate equals
$1$ or $-1$. Thus at most $2n$ linear programming feasibility
problems suffice to decide local exact recovery at a given signal.
If $A$ and $x^*$ have rational entries, this decision can be made
in polynomial time in their total binary encoding length.
\end{remark}

\begin{remark}\label{Rem_local}
The perturbation estimate in Proposition~\ref{Th_local} shows that
first-order growth implies tilt stability for general objectives.
The converse fails even for a smooth convex objective with a unique
minimizer and nonzero data. Consider
\[
 F(x_1,x_2)=x_2^2,\qquad A=(1\ \ 0),\qquad x^*=(1,0).
\]
Then $Ax^*=1\ne0$ and the feasible set is
$\Delta=\{(1,t):t\in\R\}$. The point $x^*$ uniquely minimizes $F$
on $\Delta$, but it is not sharp, since, as $t\to0$ through nonzero values,
\[
 \frac{F(1,t)-F(1,0)}{\norm{(1,t)-(1,0)}_2}
 =|t|\longrightarrow0.
\]
Nevertheless, the localized tilted minimizer is
\[
 T_\rho(v)=\{(1,v_2/2)\}
\]
for every $v=(v_1,v_2)\in\R^2$ with $\norm v_2<2\rho$.
This mapping is single-valued and Lipschitz continuous, so $x^*$ is
tilt stable. Thus tilt stability allows the minimizer to move with
the perturbation, whereas first-order growth keeps it fixed for
sufficiently small perturbations.

For the ratio objective, zero-slope rays have negative curvature
along every nonzero feasible direction because $Ax^*\ne0$.
In the example above, the curvature is positive instead.

\end{remark}

\begin{corollary}\label{Cor_support}
Let $x^*$ be a local minimizer of $\Phi$ on $\Delta$. Then
\[
 \ker(A)\cap\{d\in\R^n:d_{S^c}=0\}=\{0\}.
\]
Equivalently, $A_S$ has full column rank. In particular,
$|S|\le\operatorname{rank}(A)$.
Moreover, the set of local minimizers of $\Phi$ on $\Delta$ is finite.
\end{corollary}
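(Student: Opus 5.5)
The plan is to read the support condition off Theorem~\ref{Th_growth}(iii), which is available because Proposition~\ref{Th_local} shows that local minimality implies each condition of that theorem.

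\emph{Full column rank.} Suppose $d\in\ker(A)$ is nonzero with $d_{S^c}=0$. Then the left-hand side of Theorem~\ref{Th_growth}(iii) is $\norm{d_{S^c}}_1=0$. The strict inequality there would force $0>|\cdot|\ge0$, which is impossible. Hence
\[
 \ker(A)\cap\{d:d_{S^c}=0\}=\{0\}.
\]
A vector $d$ with $d_{S^c}=0$ lies in $\ker(A)$ exactly when $A_Sd_S=0$. So this intersection condition says $\ker(A_S)=\{0\}$, that is, $A_S$ has full column rank. Then $|S|=\operatorname{rank}(A_S)\le\operatorname{rank}(A)$.

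\emph{Finiteness.} I would show that each support set $T$ carries at most one local minimizer.

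\begin{itemize}
\item[(a)] Let $x,y$ be local minimizers with common support $T$. By the first part, $A_T$ has full column rank.
\item[(b)] Both satisfy $A_Tx_T=b=A_Ty_T$ and vanish off $T$. Injectivity of $A_T$ then gives $x_T=y_T$, hence $x=y$.
\item[(c)] There are at most $2^n$ supports, so the set of local minimizers is finite.
\end{itemize}

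Alternatively, in (b) one could note that $x-y\in\ker(A)$ has $(x-y)_{T^c}=0$ and apply the intersection condition directly.

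The main point needing care is that no sign information is required. The uniqueness step uses only the support and the linear system, not criticality or the sign pattern of $x^*_S$. I therefore do not expect a genuine obstacle; the whole argument reduces to the strictness in Theorem~\ref{Th_growth}(iii), which in turn rests on the negative ray curvature of Remark~\ref{Rem_feasible_curvature}.
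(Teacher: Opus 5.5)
Your proposal is correct and follows the paper's proof essentially step for step. You invoke Proposition~\ref{Th_local} to obtain the strict inequality of Theorem~\ref{Th_growth}(iii) and rule out a nonzero kernel vector supported on $S$. You then identify this with injectivity of $A_S$, and you get finiteness from the fact that $A_Tx_T=b$ has at most one solution for each support $T$.
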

\begin{proof}
By Proposition~\ref{Th_local}, $x^*$ satisfies
Theorem~\ref{Th_growth}(iii). If a nonzero $d\in\ker(A)$ were
supported on $S$, that strict inequality would have zero on its
left-hand side and a nonnegative quantity on its right-hand side,
a contradiction. Thus the displayed intersection is trivial.
Since $Ad=A_Sd_S$ for every $d$ supported on $S$, this is precisely
injectivity of $A_S$, or equivalently full column rank. Hence
$|S|=\operatorname{rank}(A_S)\le\operatorname{rank}(A)$.

For each index set $T$, if $A_T$ has full column rank, then
$A_Tx_T=b$ has at most one solution. Thus at most one local
minimizer has support $T$. Since there are
only finitely many subsets of $\{1,\ldots,n\}$, the set of local
minimizers is finite.
\end{proof}

\begin{remark} (i)
The support-rank conclusion in Corollary~\ref{Cor_support} is the
necessary condition established by Tao and Zhang~\cite[Theorem 5]{TZ}.
Their Corollary~2 also establishes finiteness of the local minimizer
set. Here it follows from Theorem~\ref{Th_growth}, which also gives
positive directional derivatives along all nonzero feasible directions,
first-order growth, and tilt stability.

(ii) The rank condition alone does
not imply local minimality. For example, let
\[
 A=\begin{pmatrix}1&0&10\\0&1&0\end{pmatrix},\qquad
 x^*=(2,1,0),\qquad d=(10,0,-1),
\]
$A_S=I_2$ and $Ad=0$, whereas
$\Phi'(x^*;d)=-1/\sqrt5<0$ by \eqref{Eq_derivative}.

\end{remark}

\begin{proposition}
\label{Th_critical}
Let $x^*\in\Delta$ be a critical point of $\Phi$ on $\Delta$.
Then exactly one of the following alternatives holds:
\begin{itemize}
\item[(i)] $x^*$ is a sharp local minimizer of $\Phi$ on $\Delta$.
\item[(ii)] There exists $d\in\ker(A)\setminus\{0\}$ such that
\[
 \Phi'(x^*;d)=0,\qquad
 \Phi''(x^*;d)<0.
\]
In this case, $\Phi(x^*+td)<\Phi(x^*)$ for every sufficiently small
$t>0$, so $x^*$ is not a local minimizer.
\end{itemize}
\end{proposition}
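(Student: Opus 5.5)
The argument reduces the dichotomy to a statement about the sign of $\Phi'(x^*;\cdot)$ on $\ker(A)$, using criticality together with the earlier directional results. Since $x^*$ is critical, Proposition~\ref{Prop_stationary}(ii) gives $\Phi'(x^*;d)\ge0$ for every $d\in\ker(A)$. So exactly one of two cases occurs. Either $\Phi'(x^*;d)>0$ for every nonzero $d\in\ker(A)$, or there is a nonzero $d\in\ker(A)$ with $\Phi'(x^*;d)=0$. These cases are mutually exclusive and exhaustive by construction. The remaining task is to identify the first with alternative (i) and the second with alternative (ii).

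In the first case, $\ker(A)\cap\{d:\Phi'(x^*;d)\le0\}=\{0\}$. This is condition (ii) of Theorem~\ref{Th_growth}, so $x^*$ is a sharp local minimizer. If $\ker(A)=\{0\}$, the first case holds vacuously and Theorem~\ref{Th_growth} still applies.

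In the second case, take a nonzero $d\in\ker(A)$ with $\Phi'(x^*;d)=0$.
\begin{itemize}
\item Since $Ax^*=b\ne0$, Remark~\ref{Rem_feasible_curvature}, via \eqref{Eq_no_parallel_feasible}, gives $d\notin\operatorname{span}\{x^*\}$.
\item Lemma~\ref{Lem_ray} then gives $\Phi''(x^*;d)<0$.
\item By the definition of $\Phi''$ in \eqref{Eq_curvature} with $\Phi'(x^*;d)=0$,
\[
\Phi(x^*+td)-\Phi(x^*)=\tfrac12\Phi''(x^*;d)t^2+o(t^2)<0
\]
for all sufficiently small $t>0$.
\item Each $x^*+td$ is feasible, so $x^*$ is not a local minimizer.
\end{itemize}

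Exclusivity of (i) and (ii) as stated also follows directly. Alternative (i) forces $\Phi'(x^*;d)\ge\kappa\norm d_2>0$ on nonzero kernel directions, as in the proof of (i)$\Rightarrow$(ii) in Theorem~\ref{Th_growth}. This contradicts $\Phi'(x^*;d)=0$ in (ii). Equivalently, (ii) implies $x^*$ is not a local minimizer, while (i) implies it is.

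There is no real obstacle in this proof. The only point needing care is that nonparallelism of $d$ and $x^*$ uses the hypothesis $b\ne0$, which Remark~\ref{Rem_feasible_curvature} supplies.
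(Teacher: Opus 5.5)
Your proposal is correct and follows essentially the same route as the paper. Criticality via Proposition~\ref{Prop_stationary} gives $\Phi'(x^*;\cdot)\ge0$ on $\ker(A)$; strict positivity on nonzero kernel directions gives sharpness by Theorem~\ref{Th_growth}, and a nonzero zero-slope kernel direction gives $\Phi''(x^*;d)<0$ by Remark~\ref{Rem_feasible_curvature}, hence descent and exclusivity (your extra argument via $\Phi'(x^*;d)\ge\kappa\norm d_2$ is just a more explicit form of the paper's ``so (i) cannot hold'').
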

\begin{proof}
By Proposition~\ref{Prop_stationary}, criticality gives
$\Phi'(x^*;d)\ge0$ for every $d\in\ker(A)$.
If the inequality is strict for every nonzero $d\in\ker(A)$,
Theorem~\ref{Th_growth} yields (i).
Otherwise, there exists a nonzero $d\in\ker(A)$ with
$\Phi'(x^*;d)=0$. Remark~\ref{Rem_feasible_curvature} gives
$\Phi''(x^*;d)<0$, so
\[
 \Phi(x^*+td)-\Phi(x^*)
 =\frac12\Phi''(x^*;d)t^2+o(t^2)<0
\]
for all sufficiently small $t>0$. Thus (ii) holds and $x^*$ is
not a local minimizer, so (i) cannot hold.
\end{proof}

Under the full-support assumption, the second alternative in
Proposition~\ref{Th_critical} can be strengthened to strict local maximality.

\begin{corollary}
\label{Cor_fullsupport}
Suppose $\ker(A)\ne\{0\}$. Every critical point $x^*\in\Delta$
with $\supp(x^*)=\{1,\ldots,n\}$ is a strict local maximizer of
$\Phi$ on $\Delta$.
\end{corollary}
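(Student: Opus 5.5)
The plan is to exploit the fact that full support makes $\Phi$ smooth near $x^*$, so that criticality becomes a genuine first-order condition and the negative ray curvature of Lemma~\ref{Lem_ray} becomes a negative definite Hessian on $\ker(A)$.

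\emph{Step 1: smoothness.} Since $S=\{1,\ldots,n\}$, every coordinate of $x^*$ is nonzero. For $x$ in a small ball around $x^*$ the signs are preserved, so $\norm{x}_1=\ip{\sgn(x^*)}{x}$ is linear there. Hence, on that ball, $\Phi(x)=\ip{\sgn(x^*)}{x}/\norm{x}_2$ is $C^\infty$, because $x^*\ne0$.

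\emph{Step 2: first-order condition.} Since $S^c=\emptyset$, formula \eqref{Eq_derivative} shows that $\Phi'(x^*;\cdot)=\ip{\nabla\Phi(x^*)}{\cdot}$ is linear. By Proposition~\ref{Prop_stationary}(ii), criticality gives $\Phi'(x^*;d)\ge0$ for all $d\in\ker(A)$. Applying this to $\pm d$ yields $\ip{\nabla\Phi(x^*)}{d}=0$ on $\ker(A)$, so the linear term vanishes along every feasible direction.

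\emph{Step 3: negative definiteness on $\ker(A)$.} Because $\Phi$ is $C^2$ and $\Phi'(x^*;d)=0$ for $d\in\ker(A)$, the second-order directional derivative in \eqref{Eq_curvature} equals the quadratic form $\ip{d}{\nabla^2\Phi(x^*)d}$. Thus
\[
 \ip{d}{\nabla^2\Phi(x^*)d}
 =\frac{\norm{x^*}_1}{\norm{x^*}_2^5}\bigl(\ip{x^*}{d}^2-\norm{x^*}_2^2\norm d_2^2\bigr),
 \qquad d\in\ker(A).
\]
By Remark~\ref{Rem_feasible_curvature}, $\ker(A)\cap\operatorname{span}\{x^*\}=\{0\}$, so this quantity is strictly negative on the unit sphere of $\ker(A)$. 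That sphere is nonempty because $\ker(A)\ne\{0\}$, and it is compact. The continuous form therefore attains a maximum $-2\mu<0$ there, and by homogeneity $\ip{d}{\nabla^2\Phi(x^*)d}\le-2\mu\norm d_2^2$ for all $d\in\ker(A)$. Alternatively, Cauchy--Schwarz gives this bound directly with $2\mu$ expressed through the angle between $x^*$ and $\ker(A)$.

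\emph{Step 4: Taylor estimate.} For $x=x^*+d\in\Delta$, so that $d\in\ker(A)$, second-order Taylor expansion together with Step 2 gives
\[
 \Phi(x^*+d)=\Phi(x^*)+\tfrac12\ip{d}{\nabla^2\Phi(x^*)d}+o(\norm d_2^2)
 \le\Phi(x^*)-\mu\norm d_2^2+o(\norm d_2^2).
\]
The right-hand side is strictly less than $\Phi(x^*)$ whenever $0<\norm d_2<\rho$ for some small $\rho>0$. Hence $x^*$ is a strict local maximizer of $\Phi$ on $\Delta$, in fact with quadratic decrease.

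\emph{Main obstacle.} The only real subtlety is the uniformity in Step 3. Lemma~\ref{Lem_ray} alone gives decrease along each individual ray, which does not suffice for a strict local maximum. The passage to the Hessian, which is available only because full support makes $\Phi$ smooth, combined with the compactness argument, is what turns the ray2wise statement into a uniform neighborhood statement.
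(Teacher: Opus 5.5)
Your proposal is correct and follows the paper's proof essentially step for step: full support makes $\Phi$ smooth near $x^*$, testing $\pm d$ kills the linear term on $\ker(A)$, and \eqref{Eq_curvature} together with \eqref{Eq_no_parallel_feasible} makes the Hessian negative definite on $\ker(A)$. Compactness of the unit sphere and a second-order Taylor expansion then give the strict local maximum; your version additionally makes the constant $\mu$ and the resulting quadratic decrease explicit.
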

\begin{proof}
Since all coordinates of $x^*$ are nonzero, $\Phi$ is smooth near
$x^*$. Its derivative is linear in $d$. Stationarity applied to
$d$ and $-d$ therefore gives $\Phi'(x^*;d)=0$ for every
$d\in\ker(A)$. Combining
\eqref{Eq_no_parallel_feasible} with \eqref{Eq_curvature} gives
\[
 \ip{d}{\nabla^2\Phi(x^*)d}<0
 \qquad(d\in\ker(A)\setminus\{0\}).
\]
Thus the Hessian is negative definite on the feasible direction space.
Compactness of its unit sphere and the second-order Taylor expansion
imply strict local maximality on $\Delta$.
\end{proof}

\begin{example}\label{Ex_stationary}
Let $A=(1\ \ 1)$, $b=2$, and $x^*=(1,1)$. For $d=(1,-1)$,
$\Phi'(x^*;d)=\Phi'(x^*;-d)=0$, but for small nonzero $t$,
\[
 \Phi(1+t,1-t)=\frac2{\sqrt{2+2t^2}}<\sqrt2=\Phi(x^*).
\]
All feasible directional derivatives are zero at $x^*$; it is a local
maximum along its feasible line. Nonnegative directional derivatives are
therefore not a sufficient local optimality condition.
\end{example}

The nonnegativity condition in \cite[Remark 3.5]{XNTW}
characterizes criticality by Proposition~\ref{Prop_stationary}, but
Example~\ref{Ex_stationary} shows that it is insufficient for local
minimality. Theorem~\ref{Th_growth} gives the corresponding necessary
and sufficient condition through strict positivity of all nonzero
feasible directional derivatives.

\subsection{Global Exact Recovery}\label{Sec_recovery}
Global exact recovery requires uniqueness on the whole feasible set.
Feasible displacements and the global descent cone provide equivalent
characterizations of this property.

\begin{definition}
A signal $x^*\in\Delta$ is \emph{exactly recovered} by minimizing
$\Phi$ over $\Delta$ if it is the unique global minimizer, that is,
$\argmin_{x\in\Delta}\Phi(x)=\{x^*\}$.
\end{definition}

\begin{theorem}\label{Th_recovery}
Let $x^*\in\Delta$. The following statements
are equivalent:
\begin{itemize}
\item[(i)] $x^*$ is exactly recovered by minimizing $\Phi$ over $\Delta$.
\item[(ii)] $\Phi(x^*+d)>\Phi(x^*)$ for every
$d\in\ker(A)\setminus\{0\}$.
\item[(iii)] $\Psi_{x^*}(x^*+d)>0$ for every
$d\in\ker(A)\setminus\{0\}$.
\item[(iv)] $\ker(A)\cap D_\Phi(x^*)=\{0\}$.
\end{itemize}
\end{theorem}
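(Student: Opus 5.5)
The plan is to prove the cycle (i) $\Leftrightarrow$ (ii) $\Leftrightarrow$ (iii) and then (ii) $\Leftrightarrow$ (iv). Each step uses only the affine structure $\Delta=x^*+\ker(A)$, the identity \eqref{Eq_gap}, and the definition \eqref{Eq_descent} of the global descent cone. Throughout, Remark~\ref{Rem_feasible_curvature}, or directly $b\ne0$, guarantees that every feasible point is nonzero, so $\Phi$ is defined on all of $\Delta$.

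For (i) $\Leftrightarrow$ (ii), I will use that every $x\in\Delta\setminus\{x^*\}$ can be written uniquely as $x^*+d$ with $d\in\ker(A)\setminus\{0\}$. Then $x^*$ being the unique global minimizer is literally the strict inequality $\Phi(x^*+d)>\Phi(x^*)$ for all such $d$.

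For (ii) $\Leftrightarrow$ (iii), I will apply \eqref{Eq_gap} with $x=x^*+d\ne0$. This gives $\Psi_{x^*}(x^*+d)=\norm{x^*+d}_2\,(\Phi(x^*+d)-\Phi(x^*))$. Since the norm factor is positive, the two strict inequalities are equivalent.

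For (ii) $\Rightarrow$ (iv), let $d\in\ker(A)\cap D_\Phi(x^*)$ and suppose, for contradiction, that $d\ne0$. By definition there is $t>0$ with $x^*+td\ne0$ and $\Phi(x^*+td)\le\Phi(x^*)$. But $td\in\ker(A)\setminus\{0\}$, so (ii) applied to $td$ gives $\Phi(x^*+td)>\Phi(x^*)$, a contradiction.

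For (iv) $\Rightarrow$ (ii), suppose some $d\in\ker(A)\setminus\{0\}$ has $\Phi(x^*+d)\le\Phi(x^*)$. The point $x^*+d$ is feasible and hence nonzero. Taking $t=1$ in \eqref{Eq_descent} shows $d\in D_\Phi(x^*)$, so $d$ is a nonzero element of $\ker(A)\cap D_\Phi(x^*)$, contradicting (iv).

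There is no genuine obstacle here. The only point requiring care is that membership in $D_\Phi(x^*)$ asks for the nonzero condition $x^*+td\ne0$. This is automatic for feasible points because $A(x^*+td)=b\ne0$, and I will state it explicitly once at the start. Optionally, I will note afterwards that combining (iv) with the explicit description in Lemma~\ref{Prop_descent} yields a concrete pointwise test, and that (iv) compared with Theorem~\ref{Th_growth}(iv) shows the global descent cone replacing the tangent cone.
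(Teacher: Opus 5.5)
Your proposal is correct and follows the paper's proof essentially step for step. You use $\Delta=x^*+\ker(A)$ for (i)$\Leftrightarrow$(ii), the identity \eqref{Eq_gap} with the positive factor $\norm{x^*+d}_2$ for (ii)$\Leftrightarrow$(iii), and the step $t=1$ together with $td\in\ker(A)\setminus\{0\}$ for (ii)$\Leftrightarrow$(iv), exactly as the paper does; the paper likewise leaves implicit the trivial fact that $0\in\ker(A)\cap D_\Phi(x^*)$, which holds because $x^*\ne0$.
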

\begin{proof}
\emph{(i) $\Longleftrightarrow$ (ii).}
Every feasible point has the form $x^*+d$ with $d\in\ker(A)$.
Thus $x^*$ is the unique global minimizer if and only if (ii) holds.

\emph{(ii) $\Longleftrightarrow$ (iii).}
Since $Ax=b\ne0$ for every $x\in\Delta$, all feasible points
are nonzero. By \eqref{Eq_gap}, for every $d\in\ker(A)$,
\[
 \Psi_{x^*}(x^*+d)
 =\norm{x^*+d}_2\bigl(\Phi(x^*+d)-\Phi(x^*)\bigr).
\]
The positive norm factor makes (ii) and (iii) equivalent.

\emph{(ii) $\Longleftrightarrow$ (iv).}
If (ii) fails, there is a nonzero $d\in\ker(A)$ with
$\Phi(x^*+d)\le\Phi(x^*)$. Taking $t=1$ in
\eqref{Eq_descent} gives $d\in D_\Phi(x^*)$, so (iv) fails.
Conversely, if (iv) fails, take a nonzero
$d\in\ker(A)\cap D_\Phi(x^*)$. By \eqref{Eq_descent}, there
exists $t>0$ such that $\Phi(x^*+td)\le\Phi(x^*)$.
Since $td\in\ker(A)\setminus\{0\}$, this contradicts (ii).
\end{proof}

Combining Theorem~\ref{Th_recovery}(iv) with Lemma~\ref{Prop_descent}
gives an explicit test for exact recovery: for every nonzero
$d\in\ker(A)$, one must have
\[
 \Phi(d)\ge\Phi(x^*)
\]
and, for every $i\in S$ with $x_i^*d_i<0$,
\[
 \Phi\left(x^*-\frac{x_i^*}{d_i}d\right)>\Phi(x^*).
\]
Indeed, \eqref{Eq_no_parallel_feasible} excludes nonzero null-space
directions parallel to $x^*$. The two inequalities then exclude the
remaining sets in \eqref{Eq_cone}. The points in the second comparison
are feasible and hence nonzero, since their measurements equal $b\ne0$.
Thus, for each nonzero null-space direction, the test requires a
comparison of $\Phi(d)$ with $\Phi(x^*)$ and at most $|S|$ comparisons
at positive steps where a coordinate on $S$ vanishes.

\begin{remark}\label{Rem_recovery}
The strict comparison in Theorem~\ref{Th_recovery}(ii) appears,
with an additional quantifier over sparse signals, in
\cite[Eq.~(26)]{XNTW}. The descent-cone criterion is given for convex
objectives in \cite[Definition 2.7 and Fact 2.8]{ALMT} and recalled
for nuclear-norm recovery in \cite[Remark 3.1(c)]{LPB}.
The proof above shows that this set-theoretic criterion also applies
to the present nonconvex ratio without taking the closure of the cone.
\end{remark}

Combining the outer estimate in Lemma~\ref{Prop_outer} with
Theorem~\ref{Th_recovery}(iv) yields the following sufficient
conditions for global exact recovery.

\begin{corollary}\label{Cor_global_certificate}
Let $x^*\in\Delta$ and $S=\supp(x^*)$. If, for every nonzero
$d\in\ker(A)$,
$$
 \norm{d_{S^c}}_1>
 \sqrt{|S|}\,\operatorname{dist}(d,\operatorname{span}\{x^*\})
 ,
$$
then $x^*$ is exactly recovered by minimizing $\Phi$ over $\Delta$.
In particular, it suffices that
$\norm{d_{S^c}}_1>\sqrt{|S|}\norm d_2$ for every nonzero
$d\in\ker(A)$.
\end{corollary}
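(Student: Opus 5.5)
The plan is to argue by contraposition through the descent-cone criterion of Theorem~\ref{Th_recovery}. The corollary then follows almost directly from the first inclusion in Lemma~\ref{Prop_outer}. Suppose the hypothesis holds, that is,
\[
 \norm{d_{S^c}}_1>\sqrt{|S|}\,\operatorname{dist}(d,\operatorname{span}\{x^*\})
 \qquad\text{for every } d\in\ker(A)\setminus\{0\}.
\]
By Theorem~\ref{Th_recovery}, it suffices to verify (iv), namely $\ker(A)\cap D_\Phi(x^*)=\{0\}$.

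Take any nonzero $d\in\ker(A)\cap D_\Phi(x^*)$. Lemma~\ref{Prop_outer} applies because $x^*\ne0$, which follows from $Ax^*=b\ne0$. It gives
\[
 \norm{d_{S^c}}_1\le\sqrt{|S|}\,\operatorname{dist}(d,\operatorname{span}\{x^*\}).
\]
This directly contradicts the strict hypothesis at this $d$. Hence the intersection is trivial, and condition (iv) of Theorem~\ref{Th_recovery} yields that $x^*$ is the unique global minimizer.

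For the "in particular" statement, use $\operatorname{dist}(d,\operatorname{span}\{x^*\})\le\norm d_2$, which is also recorded in Lemma~\ref{Prop_outer}. Then $\norm{d_{S^c}}_1>\sqrt{|S|}\norm d_2$ implies the first hypothesis, and the conclusion follows from the first part.

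There is no real obstacle here. The only points to check are that $x^*\ne0$, so that the lemma is applicable, and that the strict inequality in the hypothesis is exactly what is needed to contradict the non-strict outer bound. Neither closure nor compactness arguments are required, because Theorem~\ref{Th_recovery}(iv) uses $D_\Phi(x^*)$ itself rather than its closure.
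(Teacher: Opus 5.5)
Your proposal is correct and follows the paper's own argument: the corollary is obtained by combining the first inclusion of Lemma~\ref{Prop_outer} with the descent-cone criterion in Theorem~\ref{Th_recovery}(iv), and the ``in particular'' part comes from $\operatorname{dist}(d,\operatorname{span}\{x^*\})\le\norm d_2$. You also correctly note that $x^*\ne0$ follows from $b\ne0$, which is what makes Lemma~\ref{Prop_outer} applicable.
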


\begin{remark}\label{Rem_global_cones}
By Lemma~\ref{Prop_localcones} and the definitions of the descent cones,
\[
 \mathcal T_C(x^*)=\mathcal R_C(x^*)
 \subseteq D_\Phi(x^*)\subseteq\cl D_\Phi(x^*).
\]
Thus the condition
\[
 \ker(A)\cap\cl D_\Phi(x^*)=\{0\}
\]
is sufficient for both first-order growth and global exact recovery
by Theorems~\ref{Th_growth} and~\ref{Th_recovery}, but is not necessary
for either property.

To see this, take $x^*=(1,0)$ and $A=(1\ \ 0)$, so
$\Delta=\{(1,t):t\in\R\}$. For every feasible direction $d=(0,t)$,
$\Phi'(x^*;d)=|t|$, which gives first-order growth by
Theorem~\ref{Th_growth}. Moreover,
$\Phi(1,t)=(1+|t|)/\sqrt{1+t^2}>1=\Phi(x^*)$ for every $t\ne0$,
so $x^*$ is uniquely globally recovered. However, the directions
$d_k=(-1/k,1)$ belong to $D_\Phi(x^*)$, since
$x^*+kd_k=(0,k)$ has objective value one, while
\[
 d_k\longrightarrow(0,1)\in\ker(A)\setminus\{0\}.
\]
Hence taking the closure can introduce a nonzero null-space direction
even when both first-order growth and global exact recovery hold.
This differs from affine-constrained nuclear-norm minimization,
where the closed descent-cone condition is equivalent to first-order
growth \cite[Theorem 3.1]{LPB}.
\end{remark}

Although global exact recovery implies local exact recovery, the
converse fails even at a sharp local minimizer, as the following
example shows.

\begin{example}
\label{Ex_local}
Take
\[
 x^*=(2,1,0),\qquad
 A=\begin{pmatrix}1&-2&0\\0&1&1\end{pmatrix},\qquad
 b=\binom01,\qquad \ker(A)=\operatorname{span}\{d\},\quad d=(-2,-1,1).
\]
Substitution into \eqref{Eq_derivative} gives
\[
 \Phi'(x^*;td)
 =\frac{-3t+|t|}{\sqrt5}
 -\frac{3(-5t)}{5\sqrt5}
 =\frac{|t|}{\sqrt5}>0\qquad(t\ne0).
\]
Thus Theorem~\ref{Th_growth} shows that $x^*$ is a sharp local
minimizer. However,
\[
 x^*+d=(0,0,1),\qquad
 \Phi(x^*+d)=1<3/\sqrt5=\Phi(x^*).
\]
Hence $x^*$ is not globally exactly recovered. In particular,
$d\in D_\Phi(x^*)$ despite $\Phi'(x^*;d)>0$.
\end{example}

\section{Uniform Exact Recovery}\label{Sec_uniform}
The pointwise criteria of Section~\ref{Sec_pointwise} yield uniform
recovery conditions for $s$-sparse signals with nonzero measurements.
For a fixed matrix $A$ and an integer $1\le s\le n$, define
\[
 \mathcal X_s(A)=\{x\in\R^n:\norm{x}_0\le s,\ Ax\ne0\}.
\]
For each $x^*\in\mathcal X_s(A)$, set $b=Ax^*\ne0$ and denote
the corresponding feasible set by $\Delta$, as in \eqref{Eq_ratio}.
Uniform recovery requires the relevant
pointwise criterion to hold for every signal in this class. Bounding
the signal-dependent directional terms yields a uniform local
sufficient condition and quantitative growth estimates. The projection
bounds for the global descent cone yield uniform global sufficient
conditions. Optimality results establish the best coefficients within
the stated scalar families, and separating examples distinguish local
recovery from global recovery.

\subsection{Uniform Local Exact Recovery}\label{Sub_uniform_local}
The matrix $A$ provides uniform local exact recovery on
$\mathcal X_s(A)$ if every $x^*\in\mathcal X_s(A)$ is a strict local
minimizer of $\Phi$ on $\Delta$. The localization radius and growth modulus may depend on the signal.
Theorem~\ref{Th_growth} and Proposition~\ref{Th_local} give the
following criterion.

\begin{lemma}\label{Prop_uniform}
The matrix $A$ provides uniform local exact recovery on
$\mathcal X_s(A)$ if and only if the inequality
\begin{equation}\label{Eq_uniform_local}
 \norm{d_{S^c}}_1>
 \left|\ip{\sgn(x^*_S)}{d_S}
 -\frac{\norm{x^*}_1}{\norm{x^*}_2^2}\ip{x^*}{d}\right|
\end{equation}
holds for every $x^*\in\mathcal X_s(A)$ and every nonzero
$d\in\ker(A)$.
\end{lemma}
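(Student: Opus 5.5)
The plan is to reduce the statement to a pointwise application of Theorem~\ref{Th_growth} and Proposition~\ref{Th_local}, one signal at a time. By definition, $A$ provides uniform local exact recovery on $\mathcal X_s(A)$ exactly when, for each $x^*\in\mathcal X_s(A)$ with data $b=Ax^*\ne0$ and feasible set $\Delta=x^*+\ker(A)$, the point $x^*$ is a strict local minimizer of $\Phi$ on $\Delta$. Thus it suffices to show, for a single fixed $x^*\in\mathcal X_s(A)$, that strict local minimality is equivalent to \eqref{Eq_uniform_local} holding for all nonzero $d\in\ker(A)$. Quantifying over $x^*$ then gives the lemma.

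For fixed $x^*$, Proposition~\ref{Th_local} shows that strict local minimality (item (ii)) is equivalent to sharp local minimality (item (iii)). The latter is item (i) of Theorem~\ref{Th_growth}, which is equivalent to item (iii) there. That item is literally inequality \eqref{Eq_uniform_local} for every $d\in\ker(A)\setminus\{0\}$, with $S=\supp(x^*)$. The hypotheses are satisfied because $x^*\in\Delta$ and $b\ne0$, which is exactly the standing setting of Section~\ref{Sec_pointwise}.

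One edge case needs a remark. If $\ker(A)=\{0\}$, the inequality is vacuous, and $\Delta=\{x^*\}$, so $x^*$ is trivially a strict local minimizer; the first line of the proof of Theorem~\ref{Th_growth} already handles this. Since the whole argument is a direct chaining of earlier equivalences, I expect no real obstacle. The only point requiring care is that the earlier results are stated for a fixed $b$, while here $b=Ax^*$ varies with the signal. This is harmless because each application uses only the pointwise data $(A,b,x^*)$ with $b\ne0$.
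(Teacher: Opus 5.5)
Your proposal is correct and follows the same route as the paper, which obtains the lemma directly from Theorem~\ref{Th_growth} and Proposition~\ref{Th_local}, applied signal by signal with $b=Ax^*\ne0$. You also correctly point out that $b\ne0$ is what Proposition~\ref{Th_local} needs for the step from strict to sharp minimality, and that the case $\ker(A)=\{0\}$ is trivial.
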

Condition~\eqref{Eq_uniform_local} depends on the signs and amplitudes
of $x^*$. Bounding the coefficients of $d_S$ uniformly gives a
sufficient condition depending only on $\ker(A)$ and $s$.

\begin{theorem}\label{Th_uniform_local}
Let $s$ be an integer with $2\le s\le n$, and set
\[
 c_s=\max\left\{1,\frac{\sqrt{s}-1}{2}\right\}.
\]
Suppose that for every nonzero $d\!\in\!\ker(A)$ and every index set
$T\!\subseteq\!\{1,\ldots,n\}$ with $1\le|T|\le s$,
\begin{equation}\label{Eq_local_certificate}
 \norm{d_{T^c}}_1>c_s\norm{d_T}_1.
\end{equation}
Then $A$ provides uniform local exact recovery on $\mathcal X_s(A)$,
and every signal in this class is a sharp and tilt-stable local
minimizer of its associated problem. For $2\le s\le9$,
condition~\eqref{Eq_local_certificate} is precisely the
$\ell_1$ null-space property of order $s$ \cite{DH,GN}.
\end{theorem}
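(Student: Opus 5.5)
The plan is to verify the pointwise criterion of Lemma~\ref{Prop_uniform} for every $x^*\in\mathcal X_s(A)$. Then Proposition~\ref{Th_local} (equivalently Theorem~\ref{Th_growth}) upgrades strict local minimality to sharpness and tilt stability. The key observation is that $x^*_{S^c}=0$, so $\ip{x^*}{d}=\ip{x^*_S}{d_S}$. The quantity inside the absolute value in \eqref{Eq_uniform_local} is therefore a linear form $\ip{w}{d_S}$ in $d_S$ alone, with
\[
 w_i=\sgn(x^*_i)-\gamma x^*_i=\sgn(x^*_i)\bigl(1-\gamma|x^*_i|\bigr),\qquad i\in S,\quad \gamma:=\frac{\norm{x^*}_1}{\norm{x^*}_2^2}.
\]
Hölder's inequality gives $|\ip{w}{d_S}|\le\max_{i\in S}|w_i|\,\norm{d_S}_1$. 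It therefore suffices to prove the uniform bound $\max_{i\in S}|1-\gamma|x^*_i||\le c_s$, after which \eqref{Eq_local_certificate} with $T=S$ yields \eqref{Eq_uniform_local}. Here $1\le|S|\le s$ holds because $Ax^*\ne0$ forces $x^*\ne0$.

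Next I bound $1-\gamma|x^*_i|$. By scale invariance of $\gamma|x^*_i|$, normalize $\norm{x^*}_2=1$, so $\gamma=\norm{x^*}_1$ and $\gamma|x^*_i|\ge0$. This gives the upper bound $1-\gamma|x^*_i|\le1$. For the lower bound, fix $i\in S$ and write $a=|x^*_i|\in(0,1]$. The remaining at most $s-1$ entries have Euclidean norm $\sqrt{1-a^2}$, so Cauchy--Schwarz gives $\norm{x^*}_1\le a+\sqrt{s-1}\sqrt{1-a^2}$. Setting $a=\cos\theta$ with $\theta\in[0,\pi/2)$, I obtain
\[
 \gamma a\le\cos^2\theta+\sqrt{s-1}\cos\theta\sin\theta=\tfrac12\bigl(1+\cos2\theta+\sqrt{s-1}\sin2\theta\bigr)\le\tfrac12\bigl(1+\sqrt s\bigr),
\]
where the last step uses $\cos2\theta+\sqrt{s-1}\sin2\theta\le\sqrt{1+(s-1)}=\sqrt s$. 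Hence $1-\gamma|x^*_i|\ge-(\sqrt s-1)/2$, and combining both bounds gives $|w_i|\le\max\{1,(\sqrt s-1)/2\}=c_s$.

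I then assemble the argument. Take a nonzero $d\in\ker(A)$ and put $T=S$ in \eqref{Eq_local_certificate}. This gives $\norm{d_{S^c}}_1>c_s\norm{d_S}_1\ge|\ip{w}{d_S}|$, which is exactly \eqref{Eq_uniform_local}. The case $d_S=0$ needs no separate treatment, since the strict inequality still gives $\norm{d_{S^c}}_1>0$. Lemma~\ref{Prop_uniform} then gives uniform local exact recovery. Proposition~\ref{Th_local} gives, for each signal, sharpness and tilt stability of $f$ for its associated problem with $b=Ax^*$. Finally, $(\sqrt s-1)/2\le1$ holds if and only if $s\le9$, so $c_s=1$ for $2\le s\le9$. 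In that range \eqref{Eq_local_certificate} reads $\norm{d_{T^c}}_1>\norm{d_T}_1$ for all $|T|\le s$, which is the $\ell_1$ null-space property of order $s$.

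The only genuinely nontrivial step is the sharp lower bound on $1-\gamma|x^*_i|$, i.e.\ maximizing $|x^*_i|\,\norm{x^*}_1$ over unit vectors with at most $s$ nonzeros. The trigonometric reduction above handles it cleanly. Everything else is bookkeeping through earlier results.
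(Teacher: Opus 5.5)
Your proposal is correct and follows the paper's proof essentially step for step. It uses the same reduction of \eqref{Eq_uniform_local} with $T=S$ to the coefficient bound $|1-u_i\norm u_1|\le c_s$, the same Cauchy--Schwarz estimate on the entries other than $i$, and your substitution $a=\cos\theta$ is exactly the paper's second Cauchy--Schwarz step with $2t-1=\cos2\theta$. The only cosmetic differences are that you bound directly with $\sqrt{s-1}$, which absorbs the case $|S|=1$ that the paper treats separately, and that you cite Proposition~\ref{Th_local} explicitly for sharpness and tilt stability.
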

\begin{proof}
Fix $x^*\in\mathcal X_s(A)$ and write $S=\supp(x^*)$ and $r=|S|$.
Since $1\le r\le s$, condition~\eqref{Eq_local_certificate}
applies with $T=S$.
If $r=1$, then
$\sgn(x^*_S)-\norm{x^*}_1x^*_S/\norm{x^*}_2^2=0$,
and \eqref{Eq_local_certificate} gives
$\norm{d_{S^c}}_1>0$ for every nonzero $d\in\ker(A)$.
Thus \eqref{Eq_uniform_local} holds.

Suppose $r\ge2$ and set $u=|x^*|/\norm{x^*}_2\in\R^n$,
where $|x^*|$ denotes the vector of componentwise absolute values.
Then $u_i>0$ for $i\in S$, $u_{S^c}=0$, and $\norm u_2=1$.
Fix $i\in S$.
Applying the Cauchy--Schwarz inequality to the remaining $r-1$ components gives
\[
 \sum_{j\in S\setminus\{i\}}u_j
 \le\sqrt{r-1}\left(\sum_{j\in S\setminus\{i\}}u_j^2\right)^{1/2}
 =\sqrt{r-1}\sqrt{1-u_i^2}.
\]
Since $\norm u_1=u_i+\sum_{j\in S\setminus\{i\}}u_j$, it follows that
\begin{equation}\label{Eq_support_coefficient_bound}
 0\le u_i\norm u_1
 \le u_i^2+\sqrt{r-1}\,u_i\sqrt{1-u_i^2}
 .
\end{equation}
To bound the right-hand side, set $t=u_i^2\in[0,1]$.
Another application of the Cauchy--Schwarz inequality gives
\[
\begin{aligned}
 2t-1+2\sqrt{r-1}\sqrt{t(1-t)}
 \le\sqrt{1+(r-1)}\sqrt{(2t-1)^2+4t(1-t)}=\sqrt{r}.
\end{aligned}
\]
Rearranging gives
\[
 t+\sqrt{r-1}\sqrt{t(1-t)}\le\frac{1+\sqrt{r}}{2}.
\]
Combining this with \eqref{Eq_support_coefficient_bound} shows that
$0\le u_i\norm u_1\le(1+\sqrt{r})/2$.
Consequently,
\[
 \left|1-u_i\norm u_1\right|
 \le\max\left\{1,\frac{\sqrt{r}-1}{2}\right\}\le c_s.
\]
It follows that, for every nonzero $d\in\ker(A)$,
\begin{equation}\label{Eq_uniform_linear_bound}
 \begin{aligned}
 \left|\ip{\sgn(x^*_S)}{d_S}
 -\frac{\norm{x^*}_1}{\norm{x^*}_2^2}\ip{x^*}{d}\right|
 \le
 \sum_{i\in S}|d_i|\,\left|1-u_i\norm u_1\right|\le c_s\norm{d_S}_1<\norm{d_{S^c}}_1.
 \end{aligned}
\end{equation}
The desired conclusion is obtained by
Lemma~\ref{Prop_uniform}.
Finally, $c_s=1$ for $2\le s\le9$.
\end{proof}

\begin{remark}\label{Rem_uniform_scope}
For $s=1$, uniform local exact recovery on $\mathcal X_1(A)$
holds without a null-space assumption: the support term in
\eqref{Eq_uniform_local} vanishes, and a nonzero feasible null-space
direction cannot be supported on the single nonzero coordinate of
$x^*$.
\end{remark}

A positive margin in the local null-space condition gives explicit
growth moduli,
localization radii, and bounds on linear perturbations.
\begin{corollary}\label{Prop_uniform_bounds}
Suppose that the assumptions of Theorem~\ref{Th_uniform_local} hold
and $\ker(A)\ne\{0\}$. Define the margin in \eqref{Eq_local_certificate} by
\begin{equation}\label{Eq_margin}
 \eta:=
 \min_{\substack{d\in\ker(A),\ \norm d_2=1\\1\le|T|\le s}}
 \bigl(\norm{d_{T^c}}_1-c_s\norm{d_T}_1\bigr).
\end{equation}
Then $\eta>0$. For each $x^*\in\mathcal X_s(A)$, let
\[
 \rho=\min\left\{
       \frac12\min_{i\in S}|x_i^*|,
       \frac{\eta\norm{x^*}_2}{\sqrt{s}}\right\},
 \qquad
 \kappa=\frac{\eta}{3\norm{x^*}_2}.
\]
The signal $x^*$ satisfies first-order growth with modulus $\kappa$
and radius $\rho$, namely,
\begin{equation}\label{Eq_uniform_growth}
 \Phi(x)\ge\Phi(x^*)+\kappa\norm{x-x^*}_2,
\end{equation}
for every $x\in\Delta$ with $\norm{x-x^*}_2\le\rho$.
In particular, $x^*$ is uniquely recovered on this feasible neighborhood
and remains its unique minimizer under every linear perturbation
$v\in\R^n$ with $\norm v_2<\kappa$:
\[
 \argmin_{\substack{x\in\Delta\\\norm{x-x^*}_2\le\rho}}
 \{\Phi(x)-\ip v{x-x^*}\}=\{x^*\}.
\]
\end{corollary}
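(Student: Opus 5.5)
The plan is to establish $\eta>0$ by compactness, then to prove \eqref{Eq_uniform_growth} by an explicit, non-asymptotic version of the expansion \eqref{Eq_expansion}, controlling the quadratic remainder through the choice of $\rho$. The perturbation statement will then follow from \eqref{Eq_fixed} in Proposition~\ref{Th_local}.

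\emph{Positivity of $\eta$.} There are finitely many index sets $T$ with $1\le|T|\le s$. For each such $T$, the map $d\mapsto\norm{d_{T^c}}_1-c_s\norm{d_T}_1$ is continuous on the compact unit sphere of $\ker(A)$, which is nonempty since $\ker(A)\ne\{0\}$. By \eqref{Eq_local_certificate} this map is strictly positive there, so each minimum is positive, and hence so is $\eta$. By positive homogeneity, $\norm{d_{S^c}}_1-c_s\norm{d_S}_1\ge\eta\norm d_2$ for every $d\in\ker(A)$.

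\emph{Exact first-order estimate.} Fix $x^*\in\mathcal X_s(A)$ and write $x=x^*+d$ with $d\in\ker(A)$ and $\norm d_2\le\rho$. Since $\rho\le\frac12\min_{i\in S}|x_i^*|$, the signs on $S$ are preserved, and \eqref{diff_l1} holds exactly:
\[
\norm x_1=\norm{x^*}_1+\ip{\sgn(x^*_S)}{d_S}+\norm{d_{S^c}}_1 .
\]
For the Euclidean norm I use the elementary bound $\sqrt{a^2+2\beta+c}\le a+\beta/a+c/(2a)$, valid because the square of the right-hand side exceeds the radicand by $(\beta/a+c/(2a))^2\ge0$. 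With $a=\norm{x^*}_2$, $\beta=\ip{x^*}{d}$, $c=\norm d_2^2$, this gives
\[
\norm x_2\le\norm{x^*}_2+\frac{\ip{x^*}d}{\norm{x^*}_2}+\frac{\norm d_2^2}{2\norm{x^*}_2}.
\]
Set $\gamma=\norm{x^*}_1/\norm{x^*}_2^2$. The numerator $N:=\norm x_1\norm{x^*}_2-\norm{x^*}_1\norm x_2$ of $\Phi(x)-\Phi(x^*)$ then satisfies
\[
N\ge\norm{x^*}_2\Bigl(\norm{d_{S^c}}_1+\ip{\sgn(x^*_S)}{d_S}-\gamma\ip{x^*}d\Bigr)-\frac{\norm{x^*}_1\norm d_2^2}{2\norm{x^*}_2}.
\]
The bound \eqref{Eq_uniform_linear_bound} from the proof of Theorem~\ref{Th_uniform_local} controls the signed linear term by $c_s\norm{d_S}_1$; in the case $|S|=1$ that term vanishes. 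So the bracket is at least $\norm{d_{S^c}}_1-c_s\norm{d_S}_1\ge\eta\norm d_2$.

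\emph{Remainder and denominator.} This is the step I expect to be the main obstacle, since all constants must close up exactly to give $\kappa=\eta/(3\norm{x^*}_2)$. For the remainder, $\norm{x^*}_1\le\sqrt s\norm{x^*}_2$ and $\norm d_2\le\rho\le\eta\norm{x^*}_2/\sqrt s$ give
\[
\frac{\norm{x^*}_1\norm d_2^2}{2\norm{x^*}_2}\le\frac{\sqrt s}{2}\norm d_2^2\le\frac{\eta\norm{x^*}_2}{2}\norm d_2 ,
\]
so $N\ge\frac12\eta\norm{x^*}_2\norm d_2$. For the denominator $\norm x_2\norm{x^*}_2$, I use $\rho\le\frac12\min_{i\in S}|x_i^*|\le\frac12\norm{x^*}_2$ to get $\norm x_2\le\frac32\norm{x^*}_2$. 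Dividing yields
\[
\Phi(x)-\Phi(x^*)\ge\frac{\eta}{3\norm{x^*}_2}\norm{x-x^*}_2 ,
\]
which is \eqref{Eq_uniform_growth}.

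\emph{Conclusion.} Growth with $\kappa>0$ gives unique recovery on the feasible neighborhood. The displayed perturbation statement is exactly \eqref{Eq_fixed} from Proposition~\ref{Th_local}, applied with these $\kappa$ and $\rho$: infeasible points carry $f=+\infty$, so the tilted problem over the $\rho$-ball reduces to minimizing over $\Delta$.
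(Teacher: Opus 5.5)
Your proposal is correct and follows the paper's proof essentially step for step: compactness for $\eta>0$, the exact $\ell_1$ expansion plus the bound $\sqrt{1+t}\le1+t/2$ on the Euclidean norm, the linear bound \eqref{Eq_uniform_linear_bound} combined with the margin, absorbing the quadratic term via $\Phi(x^*)\le\sqrt s$ and $\rho\le\eta\norm{x^*}_2/\sqrt s$, the bound $\norm{x}_2\le\frac32\norm{x^*}_2$ on the denominator, and finally \eqref{Eq_fixed}; your numerator $N$ is just $\norm{x^*}_2\Psi_{x^*}(x)$. One small addition: your square-root inequality also needs its right-hand side to be nonnegative, which holds because it equals $(\norm{x^*}_2^2+\norm{x}_2^2)/(2\norm{x^*}_2)$.
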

\begin{proof}
For each index set $T$ with $1\le|T|\le s$, the function
$d\mapsto\norm{d_{T^c}}_1-c_s\norm{d_T}_1$ is continuous and
strictly positive on the compact unit sphere of $\ker(A)$.
There are finitely many such index sets, so \eqref{Eq_margin}
gives $\eta>0$.

Fix $x^*\in\mathcal X_s(A)$ and set
$r=\norm{x^*}_2$ and $\omega=\Phi(x^*)$.
By \eqref{Eq_uniform_linear_bound} and the definition of $\eta$
with $T=S$, for every $d\in\ker(A)$,
\begin{equation}\label{Eq_margin_linear_bound}
 \norm{d_{S^c}}_1+\ip{\sgn(x^*_S)}{d_S}
 -\frac{\norm{x^*}_1}{r^2}\ip{x^*}{d}
 \ge\norm{d_{S^c}}_1-c_s\norm{d_S}_1
 \ge\eta\norm d_2.
\end{equation}
The last inequality follows by applying \eqref{Eq_margin} to
$d/\norm d_2$ when $d\ne0$, and is immediate when $d=0$.

Now take any $d\in\ker(A)$ with $\norm d_2\le\rho$.
Since $\rho\le\tfrac12\min_{i\in S}|x_i^*|$, the coordinates
of $x^*+d$ on $S$ retain their signs, and \eqref{diff_l1} gives
the exact expansion of its $\ell_1$ norm. Notice that
\[
 \norm{x^*+d}_2
 =r\sqrt{1+\frac{2\ip{x^*}{d}+\norm d_2^2}{r^2}}.
\]
 Applying
$\sqrt{1+t}\le1+t/2$ for $t\ge-1$ therefore yields
\begin{equation}\label{Eq_margin_norm_bound}
 \norm{x^*+d}_2
 \le r+\frac{\ip{x^*}{d}}r+\frac{\norm d_2^2}{2r}.
\end{equation}
Subtracting $\omega$ times \eqref{Eq_margin_norm_bound} from
\eqref{diff_l1}, and using $\omega r=\norm{x^*}_1$ and
\eqref{Eq_margin_linear_bound}, gives
\[
 \norm{x^*+d}_1-\omega\norm{x^*+d}_2
 \ge\eta\norm d_2-\frac{\omega}{2r}\norm d_2^2
 \ge\frac{\eta}{2}\norm d_2.
\]
Here the last inequality follows from $\omega\le\sqrt{s}$ and
$\norm d_2\le\rho\le\eta r/\sqrt{s}$.

Also, $\rho\le r/2$, so the triangle inequality gives
$0<r/2\le\norm{x^*+d}_2\le3r/2$.
Hence, 
\[
\begin{aligned}
 \Phi(x^*+d)-\Phi(x^*)
 =\frac{\norm{x^*+d}_1-\omega\norm{x^*+d}_2}
          {\norm{x^*+d}_2}\ge\frac{(\eta/2)\norm d_2}{3r/2}
 =\kappa\norm d_2.
\end{aligned}
\]
Taking $x=x^*+d$ proves \eqref{Eq_uniform_growth}.
Uniqueness and the perturbation bound follow from
Proposition~\ref{Th_local}.
\end{proof}
The growth modulus cannot be chosen uniformly over all signal scales.
To see this, fix $x^*\in\mathcal X_s(A)$ and a unit vector
$d\in\ker(A)$ under the assumptions of
Corollary~\ref{Prop_uniform_bounds}. For every $t>0$, the scaled signal $t x^*$
also belongs to $\mathcal X_s(A)$, and scale invariance gives
\[
 \Phi'(t x^*;d)=t^{-1}\Phi'(x^*;d)\longrightarrow0
 \qquad(t\to\infty).
\]
Any local growth modulus at $t x^*$ is bounded above by
$\Phi'(t x^*;d)$, as follows by applying the growth inequality
along this unit feasible direction and taking the directional
derivative. Hence no common positive growth modulus exists over
the unnormalized class.

On the normalized class
$\norm{x^*}_2=1$, Corollary~\ref{Prop_uniform_bounds} gives the
common growth modulus and perturbation threshold $\eta/3$.
If these signals also satisfy $|x_i^*|\ge a$
for every $i\in\supp(x^*)$, with a fixed $a>0$, then
$\rho_0=\min\{a/2,\eta/\sqrt{s}\}$ is a common localization radius.

\begin{remark}\label{Rem_local_comparison}
The following four sufficient conditions for uniform local exact
recovery are compared for $s\ge2$. Each must hold for every nonzero
$d\in\ker(A)$ and every index set $T$ with $1\le|T|\le s$:
\[
 \begin{array}{ll}
 \text{Rahimi et al.~\cite[Theorem 2.2]{RWDL}:}
 &\norm{d_{T^c}}_1\ge(s+1)\norm{d_T}_1,\\[1mm]
 \text{Xu et al.~\cite[Corollary 3.2]{XNTW}:}
 &\norm{d_{T^c}}_1>(\sqrt{s}+2)\norm{d_T}_1,\;  s>6,\\[1mm]
 \text{Yu and Huang~\cite[Corollary 3.3]{YH}:}
 &\norm{d_{T^c}}_1>\dfrac{\sqrt{s}+3}{2}\norm{d_T}_1,\\[1mm]
 \text{Theorem~\ref{Th_uniform_local}:}
 &\norm{d_{T^c}}_1>\max\left\{1,\dfrac{\sqrt{s}-1}{2}\right\}\norm{d_T}_1.
 \end{array}
\]
The third condition follows by specializing
\cite[Corollary 3.3]{YH} to $p=1$ and $q=2$, for which
$K_{1,2,s}=(1+\sqrt{s})/2$ and the required coefficient is
$1+K_{1,2,s}=(3+\sqrt{s})/2$.

In each stated range, $c_s$ is smaller than the corresponding earlier
coefficient, so all three conditions imply \eqref{Eq_local_certificate}.
Example~\ref{Ex_uniform_comparison} shows that
\eqref{Eq_local_certificate} can hold when all three displayed
earlier conditions fail.

\end{remark}

\begin{example}\label{Ex_uniform_comparison}
Take $s=7$ and
\[
 A=[I_{14}\ \ -\mathbf1_{14}]\in\R^{14\times15},
 \qquad \ker(A)=\operatorname{span}\{\mathbf1_{15}\}.
\]
Since $c_7=1$, every nonzero $d=t\mathbf1_{15}$ and every
$|T|\le7$ satisfy
\[
 \norm{d_{T^c}}_1\ge8|t|>7|t|\ge\norm{d_T}_1.
\]
Thus Theorem~\ref{Th_uniform_local} guarantees uniform local
exact recovery, first-order growth, and tilt stability for all signals in
$\mathcal X_7(A)$.
For $|T|=7$, the ratio
$\norm{d_{T^c}}_1/\norm{d_T}_1=8/7$ is smaller than each of
$8$, $\sqrt7+2$, and $(\sqrt7+3)/2$.
Thus this matrix satisfies none of the three earlier conditions. 
\end{example}

For each fixed $s\ge2$, the coefficient $c_s$ cannot be reduced
in \eqref{Eq_local_certificate} if the condition is to guarantee
uniform local exact recovery for all dimensions $n$ and measurement matrices $A$.
\begin{proposition}\label{Th_coefficient}
Let $s\ge2$ be an integer, and let $c_s$ be defined as in
Theorem~\ref{Th_uniform_local}. For every $0\le c<c_s$, there exist
an integer $n>s$, a full row rank matrix
$A\in\R^{(n-1)\times n}$, and an exactly $s$-sparse signal
$x^*\in\mathcal X_s(A)$ such that
\[
 \norm{d_{T^c}}_1>c\norm{d_T}_1
\]
for every nonzero $d\in\ker(A)$ and every index set $T$ with
$|T|\le s$,
but $x^*$ is not a local minimizer of $\Phi$ on $\Delta$.
Consequently, $c_s$ is the smallest nonnegative coefficient for which
\eqref{Eq_local_certificate} guarantees uniform local exact recovery
independently of the dimension $n$ and the measurement matrix $A$.
\end{proposition}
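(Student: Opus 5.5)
The plan is to build a one-dimensional null space. Choose a single vector $d\in\R^n$ and let $A\in\R^{(n-1)\times n}$ have as rows a basis of $d^\perp$. Then $A$ has full row rank and $\ker(A)=\operatorname{span}\{d\}$. Pick $x^*$ exactly $s$-sparse with support $S$ and not parallel to $d$. This forces $b=Ax^*\ne0$, so $x^*\in\mathcal X_s(A)$.

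By Theorem~\ref{Th_growth} and Proposition~\ref{Th_local}, $x^*$ fails to be a local minimizer as soon as $\Phi'(x^*;d)\le0$. I will in fact arrange $\Phi'(x^*;d)<0$, so that $\Phi(x^*+td)<\Phi(x^*)$ directly for small $t>0$. Writing $u=|x^*|/\norm{x^*}_2$ as in the proof of Theorem~\ref{Th_uniform_local}, \eqref{Eq_derivative} gives
\[
 \norm{x^*}_2\,\Phi'(x^*;d)=\sum_{i\in S}\sgn(x_i^*)\,d_i\bigl(1-u_i\norm u_1\bigr)+\norm{d_{S^c}}_1 .
\]
The idea is to put $d_S$ on coordinates where the coefficient $|1-u_i\norm u_1|$ is nearly $c_s$, with sign chosen to make that term negative. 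The mass $\norm{d_{S^c}}_1$ is then taken slightly smaller than the resulting negative term. Finally, $d_{S^c}$ is spread over very many tiny entries, which is why $n$ is taken large. For $d\in\operatorname{span}\{d\}$ the worst index set $T$ in the null-space condition is then essentially $\supp(d_S)$ plus a few negligible entries. This makes the ratio $\norm{d_{T^c}}_1/\norm{d_T}_1$ close to $c_s$, and in particular larger than the given $c<c_s$.

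\emph{Case $c<1$ (this covers $c_s=1$).} Take $x^*=(1,\varepsilon,\ldots,\varepsilon,0,\ldots,0)$ with $s$ nonzero entries. For $i\ge2$ we have $u_i\to0$, so the coefficient $1-u_i\norm u_1\to1$. Set $d_1=0$ and $d_i=-1$ for $i\in S\setminus\{1\}$. The $S$-part of the derivative is then about $-(s-1)$. Put $N$ entries of size $\mu$ on $S^c$ with $N\mu=M$, where $c(s-1)<M<(s-1)(1-u_2\norm u_1)$. This is possible once $\varepsilon$ is small, and it gives $\Phi'(x^*;d)<0$.

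For the null-space condition with $|T|\le s$, the maximal $\norm{d_T}_1$ is $s-1+\mu$. The condition therefore reduces to $M-\mu>c(s-1+\mu)$, which holds for $\mu$ small, i.e.\ for $N$ large.

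\emph{Case $c<(\sqrt s-1)/2$ (relevant for $s>9$).} Use the equality case of the bound in Theorem~\ref{Th_uniform_local}. Take $x^*_1=\sqrt t$ and the other $s-1$ entries equal to $\sqrt{(1-t)/(s-1)}$, with $t$ the maximizer. Then $u_1\norm u_1=(1+\sqrt s)/2$, so $1-u_1\norm u_1=-(\sqrt s-1)/2$. Set $d_1=1$ and $d_i=0$ for the other $i\in S$. Add $S^c$-mass $M$ with $c<M<(\sqrt s-1)/2$, again spread as $N$ tiny entries. Then $\Phi'(x^*;d)<0$. The worst $T$ is $\{1\}$ plus $s-1$ tiny entries, and the null-space condition becomes $M-(s-1)\mu>c(1+(s-1)\mu)$, again true for small $\mu$.

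The main obstacle is this second case. There one must verify that the maximizer $t$ makes the Cauchy--Schwarz steps in the proof of Theorem~\ref{Th_uniform_local} simultaneously tight, namely equal tail entries and the proportionality $(1,\sqrt{r-1})\parallel(2t-1,2\sqrt{t(1-t)})$. I would compute $t$ explicitly from that proportionality. If exact attainment is awkward, a limiting choice of $t$ is enough, since only a strict inequality is needed.

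The final "consequently" follows by combining the construction with Theorem~\ref{Th_uniform_local}. The theorem shows that $c_s$ suffices, and the construction shows that no $c<c_s$ does.
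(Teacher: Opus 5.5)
Your proposal is correct and is essentially the paper's proof: a one-dimensional kernel $\operatorname{span}\{d\}$, with $\Phi'(x^*;d)<0$ obtained from off-support mass $M$ spread over many tiny entries, and your second case is exactly the paper's choice $u_1^2=(1+s^{-1/2})/2$, which makes both Cauchy--Schwarz steps tight. The only difference is in the case $c<1$: the paper takes a single small entry $u_1\to0$ and puts the one nonzero coordinate of $d_S$ there, while you take $s-1$ small entries all carrying $d$. This works equally well, using $\max_{|T|\le s}\norm{d_T}_1=s-1+\mu$.
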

\begin{proof}
Fix $0\le c<c_s$. Choose a positive vector $u\in\R^s$ such that
\[
 \norm u_2=1,\qquad \left|1-u_1\norm u_1\right|>c.
\]
Such a vector can be chosen as follows. If $c_s=1$, let
$u_1\in(0,1)$ tend to zero and set
$u_2=\cdots=u_s=\sqrt{(1-u_1^2)/(s-1)}$.
Then $1-u_1\norm u_1\to1$, so the required strict inequality
holds for sufficiently small $u_1$.
If $c_s>1$, choose the positive entries of $u$ so that
\[
 u_1^2=\frac{1+s^{-1/2}}2,\qquad
 u_2^2=\cdots=u_s^2=\frac{1-u_1^2}{s-1}.
\]
For this choice, $\norm u_2=1$ and
\[
 u_1\norm u_1=\frac{1+\sqrt{s}}2,\qquad
 |1-u_1\norm u_1|=\frac{\sqrt{s}-1}{2}=c_s>c.
\]

Choose $M$ with $c\!<\!M\!<\!|1-u_1\norm u_1|$. Then an integer $N\!\ge\! s$ can be chosen so that
\[
 N>\max\left\{M,\frac{(s-1)(1+c)M}{M-c}\right\}.
\]
Setting $\varepsilon=M/N$ then gives $0\!<\!\varepsilon\!<\!1$ and
\begin{align}\label{N_choose}
 M-(s-1)\varepsilon>c\bigl(1+(s-1)\varepsilon\bigr).
\end{align}
Set $n:=s+N$ and 
\[
 x^*:=(u,0_N),\quad
 d:=\bigl(-\sgn(1-u_1\norm u_1),\,0_{s-1},
                  \varepsilon\mathbf1_N\bigr),
                  \quad A:=\left[-\frac{d_{\{2,\ldots,n\}}}{d_1}\ \ I_{n-1}\right].
\]
Obviously, $A$ has full row rank and $Ad=0$.
So, $\ker(A)=\operatorname{span}\{d\}$. Since
$x^*_2=u_2>0$ and $d_2=0$, we have $x^*\notin\operatorname{span}\{d\}$.
Thus $Ax^*\ne0$, and hence $x^*\in\mathcal X_s(A)$ with $\|x^*\|_0=s$.

To verify the null-space inequality, note that the $s$ largest
absolute entries of $d$ consist of $1$ and $s-1$ copies of
$\varepsilon$. Thus, for every index set $T$ with $|T|\le s$,
\[
 \norm{d_T}_1\le1+(s-1)\varepsilon.
\]
Since $\norm d_1=1+M$, inequality~\eqref{N_choose} gives
\[
 \norm{d_{T^c}}_1=1+M-\norm{d_T}_1
 \ge M-(s-1)\varepsilon
 >c\bigl(1+(s-1)\varepsilon\bigr)\ge c\norm{d_T}_1.
\]
Since $\ker(A)\!=\!\operatorname{span}\{d\}$, $\norm{d_{T^c}}_1>c\norm{d_T}_1$ for every nonzero $d\in\ker(A)$.

It remains to show that $x^*$ is not a local minimizer.
Here $S=\{1,\ldots,s\}$, $\norm{x^*}_2=\norm{u}_2=1$, and
$\norm{d_{S^c}}_1=N\varepsilon=M$.
Since the only nonzero component of $d_S$ is $d_1$,
\eqref{Eq_derivative} gives
\[
\begin{aligned}
 \Phi'(x^*;d)
 =(1-u_1\norm u_1)d_1+M=-\left|1-u_1\norm u_1\right|+M<0.
\end{aligned}
\]
Thus $\Phi(x^*+td)<\Phi(x^*)$ for all sufficiently small $t>0$.
Since $d\in\ker(A)$, these points are feasible, so $x^*$ is not a
local minimizer.
\end{proof}
The optimality in Proposition~\ref{Th_coefficient} concerns the scalar
family \eqref{Eq_local_certificate}. For a fixed matrix, a condition
that uses the individual entries of $d_S$ or the amplitudes of $x^*$
may be less restrictive. Moreover, \eqref{Eq_local_certificate} does
not guarantee uniform global exact recovery, as the next example shows.
\begin{example}\label{Ex_uniform_local_global}
Take $s=7$ and set
\[
 d=(\mathbf1_7,\,10,\,\tfrac12\mathbf1_{32})\in\R^{40},
 \qquad
 A=[I_{39}\ \ -2d_{\{1,\ldots,39\}}],
 \qquad
 x^*=(-\mathbf1_7,0_{33}).
\]
Then $\ker(A)=\operatorname{span}\{d\}$ and $x^*\in\mathcal X_7(A)$.
Since $\norm d_1=33$ and the seven largest absolute entries of $d$
sum to $16$, every index set $T$ with $|T|\le7$ satisfies
\[
 \norm{d_{T^c}}_1-\norm{d_T}_1
 =33-2\norm{d_T}_1\ge1.
\]
By homogeneity and $c_7=1$, $A$ satisfies
\eqref{Eq_local_certificate} and hence provides uniform local exact
recovery on $\mathcal X_7(A)$ by Theorem~\ref{Th_uniform_local}.
However,
\[
 x^*+d=(0_7,\,10,\,\tfrac12\mathbf1_{32}),\qquad
 \Phi(x^*+d)=\frac{26}{\sqrt{108}}<\sqrt7=\Phi(x^*).
\]
Thus $x^*$ is not a global minimizer, so uniform global exact recovery
on $\mathcal X_7(A)$ fails.
\end{example}

\subsection{Uniform Global Exact Recovery}\label{Sub_uniform_global}
The matrix $A$ provides uniform global exact recovery on
$\mathcal X_s(A)$ if every $x^*\in\mathcal X_s(A)$ is the unique
global minimizer of $\Phi$ on $\Delta$.
Applying Theorem~\ref{Th_recovery} to each signal gives the following
characterization.

\begin{lemma}\label{Prop_uniform_global}
The matrix $A$ provides uniform global exact recovery on
$\mathcal X_s(A)$ if and only if the inequality
\begin{equation}\label{Eq_uniform_global}
 \norm{x^*+d}_1>
 \Phi(x^*)\norm{x^*+d}_2
\end{equation}
holds for every $x^*\in\mathcal X_s(A)$ and every nonzero
$d\in\ker(A)$. Equivalently,
\[
 \ker(A)\cap D_\Phi(x^*)=\{0\}
\]
for every $x^*\in\mathcal X_s(A)$.
\end{lemma}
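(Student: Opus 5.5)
The plan is to reduce everything to the pointwise characterization in Theorem~\ref{Th_recovery}, applied separately to each signal in the class. By definition, $A$ provides uniform global exact recovery on $\mathcal X_s(A)$ exactly when, for every $x^*\in\mathcal X_s(A)$, the signal $x^*$ is the unique global minimizer of $\Phi$ over $\Delta=\{x:Ax=Ax^*\}$. So it suffices to check that, for a fixed $x^*\in\mathcal X_s(A)$, this uniqueness is equivalent both to \eqref{Eq_uniform_global} holding for all nonzero $d\in\ker(A)$ and to $\ker(A)\cap D_\Phi(x^*)=\{0\}$. Quantifying over $x^*$ then gives both asserted equivalences.

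Fix $x^*\in\mathcal X_s(A)$ and set $b=Ax^*\ne0$. Then $x^*\in\Delta$, and the standing hypothesis $b\ne0$ of Section~\ref{Sec_pointwise} holds, so Theorem~\ref{Th_recovery} applies to this $x^*$.
\begin{itemize}
\item Item (iii) of that theorem says $\Psi_{x^*}(x^*+d)>0$ for every nonzero $d\in\ker(A)$. By the definition of $\Psi_{x^*}$, this reads $\norm{x^*+d}_1-\Phi(x^*)\norm{x^*+d}_2>0$, which is exactly \eqref{Eq_uniform_global}.
\item Item (iv) of the same theorem is the descent-cone condition $\ker(A)\cap D_\Phi(x^*)=\{0\}$.
\end{itemize}
The equivalence of (i) with (iii) and with (iv) therefore yields both reformulations for this fixed signal.

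The only point needing care is that $\Delta$ and $b$ depend on $x^*$, whereas $\ker(A)$ and $\Psi_{x^*}$ are expressed without reference to $b$. This is what lets the conditions be stated uniformly over the class. There is no substantive obstacle: the proof is a direct unwinding of definitions plus a signal-by-signal invocation of Theorem~\ref{Th_recovery}.
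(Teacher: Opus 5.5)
Your proposal is correct and is essentially the paper's own argument. The paper also obtains the lemma by applying Theorem~\ref{Th_recovery} to each $x^*\in\mathcal X_s(A)$ with $b=Ax^*\ne0$, reading item (iii) as \eqref{Eq_uniform_global} through the definition of $\Psi_{x^*}$ and item (iv) as the descent-cone condition.
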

Lemma~\ref{Prop_outer} replaces the comparisons over all
$x^*\in\mathcal X_s(A)$ in \eqref{Eq_uniform_global} by a sufficient
condition on $\ker(A)$ and $s$.

\begin{theorem}\label{Th_uniform_global}
Suppose that for every nonzero $d\in\ker(A)$ and every index set
$T\subseteq\{1,\ldots,n\}$ with $|T|\le s$,
\begin{equation}\label{Eq_global_certificate}
 \norm{d_{T^c}}_1>\sqrt{s}\norm d_2.
\end{equation}
Then $A$ provides uniform global exact recovery on $\mathcal X_s(A)$.
In particular, \eqref{Eq_global_certificate} holds if
\begin{equation}\label{Eq_global_ratio}
 \inf_{0\ne d\in\ker(A)}
       \Phi(d)>2\sqrt{s},
\end{equation}
where the infimum is interpreted as $+\infty$
when $\ker(A)=\{0\}$.
\end{theorem}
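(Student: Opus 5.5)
The first claim follows by combining Lemma~\ref{Prop_uniform_global} with the outer estimate of Lemma~\ref{Prop_outer}. The second claim is a scalar estimate showing that \eqref{Eq_global_ratio} forces \eqref{Eq_global_certificate}.

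\emph{Main claim.} Fix $x^*\in\mathcal X_s(A)$ and write $S=\supp(x^*)$, so $|S|\le s$ and $Ax^*=b\ne0$. By Lemma~\ref{Prop_uniform_global}, it suffices to show that $\ker(A)\cap D_\Phi(x^*)=\{0\}$. Suppose instead that some nonzero $d\in\ker(A)$ lies in $D_\Phi(x^*)$. The second inclusion of Lemma~\ref{Prop_outer} then gives
\[
 \norm{d_{S^c}}_1\le\sqrt{|S|}\norm d_2\le\sqrt s\norm d_2 .
\]
Applying \eqref{Eq_global_certificate} with $T=S$ gives the reverse strict inequality, a contradiction. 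This is exactly the second sufficient condition of Corollary~\ref{Cor_global_certificate}, applied signal by signal. Because $x^*$ was an arbitrary element of $\mathcal X_s(A)$, uniform global recovery follows.

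\emph{Sufficiency of \eqref{Eq_global_ratio}.} If $\ker(A)=\{0\}$, there is nothing to prove. Otherwise fix a nonzero $d\in\ker(A)$ and an index set $T$ with $|T|\le s$. By Cauchy--Schwarz,
\[
 \norm{d_T}_1\le\sqrt{|T|}\,\norm{d_T}_2\le\sqrt s\,\norm d_2 .
\]
Hence
\[
 \norm{d_{T^c}}_1=\norm d_1-\norm{d_T}_1\ge\Phi(d)\norm d_2-\sqrt s\norm d_2 .
\]
Since $\Phi(d)>2\sqrt s$, this yields
\[
 \norm{d_{T^c}}_1>2\sqrt s\norm d_2-\sqrt s\norm d_2=\sqrt s\norm d_2,
\]
which is \eqref{Eq_global_certificate}. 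Only the strict pointwise inequality $\Phi(d)>2\sqrt s$ is used here, and it holds for each $d$ because the infimum exceeds $2\sqrt s$. The strict inequality in \eqref{Eq_global_certificate} is preserved, so no compactness argument is needed.

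\emph{Where the difficulty lies.} Neither step is hard once the earlier results are available. The substantive work, the projection estimate bounding $\norm{d_{S^c}}_1$ for descent directions, is already contained in Lemma~\ref{Prop_outer}. The one point to check is that the $s$-dependence enters only through $|S|\le s$, so that a single condition on $\ker(A)$ serves all signals simultaneously. The bound $\sqrt{|S|}\le\sqrt s$ handles this. The case $|S|<s$ needs no special treatment, since the constraint $|T|\le s$ in \eqref{Eq_global_certificate} already allows $T=S$.
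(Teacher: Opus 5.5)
Your proposal is correct and follows essentially the same route as the paper: the second inclusion of Lemma~\ref{Prop_outer} with $T=S$ and $|S|\le s$ gives $\ker(A)\cap D_\Phi(x^*)=\{0\}$, and Lemma~\ref{Prop_uniform_global} then yields uniform global exact recovery. Your derivation of \eqref{Eq_global_certificate} from \eqref{Eq_global_ratio} uses the same Cauchy--Schwarz estimate $\norm{d_T}_1\le\sqrt{s}\norm{d}_2$ as the paper.
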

\begin{proof}
Fix $x^*\in\mathcal X_s(A)$ and set $S=\supp(x^*)$.
For every $d\in D_\Phi(x^*)$, Lemma~\ref{Prop_outer} and $|S|\le s$ give
\begin{equation*} \norm{d_{S^c}}_1
 \le\sqrt{|S|}\norm d_2\le\sqrt{s}\norm d_2.
\end{equation*}
Activating condition~\eqref{Eq_global_certificate} by taking $T\!=\!S$ leads to
$\ker(A)\cap D_\Phi(x^*)\!=\!\{0\}$ for every
$x^*\in\mathcal X_s(A)$, and then Lemma~\ref{Prop_uniform_global}
yields uniform global exact recovery.

To prove the final assertion, suppose \eqref{Eq_global_ratio} holds.
For every nonzero $d\in\ker(A)$ and every index set $T$ with
$|T|\le s$, the Cauchy--Schwarz inequality gives
$\norm{d_T}_1\le\sqrt{|T|}\norm{d_T}_2\le\sqrt{s}\norm d_2$.
Hence,
\(
 \norm{d_{T^c}}_1\!=\!\norm d_1-\norm{d_T}_1
 \!>\!2\sqrt{s}\norm d_2-\sqrt{s}\norm d_2
 \!=\!\sqrt{s}\norm d_2.
\)
Thus \eqref{Eq_global_ratio} implies \eqref{Eq_global_certificate}.
\end{proof}

\begin{remark}\label{Rem_global_certificate}
The sufficient condition in \cite[Eq.~(31)]{XNTW},
specialized to $q=1$, requires
\begin{equation}\label{Eq_global_old}
 \norm{d_{T^c}}_1-\norm{d_T}_1>\sqrt{s}\norm d_2
\end{equation}
for every nonzero $d\in\ker(A)$ and every index set $T$ with
$|T|\le s$. This condition implies \eqref{Eq_global_certificate}.
The scalar bound \eqref{Eq_global_ratio} also improves the coefficient
three in \cite[Theorem 4.1]{XNTW} to two.
Example~\ref{Ex_global_certificate} shows that
\eqref{Eq_global_certificate} can hold when both
\eqref{Eq_global_old} and \eqref{Eq_global_ratio} fail.
\end{remark}

The coefficient two in \eqref{Eq_global_ratio} is optimal when both
$s$ and $n$ may vary.
\begin{proposition}\label{Prop_global_coefficient}
For every $0\le c<2$, there exist integers $2\le s<n$, a full row
rank matrix $A\in\R^{(n-1)\times n}$, and an exactly $s$-sparse
signal $x^*\in\mathcal X_s(A)$ such that
\[
 \inf_{0\ne d\in\ker(A)}\Phi(d)>c\sqrt{s},
\]
but $x^*$ is not a global minimizer of $\Phi$ on $\Delta$.
Consequently, the coefficient two in
\eqref{Eq_global_ratio} cannot be decreased uniformly over all $s$ and $n$.
\end{proposition}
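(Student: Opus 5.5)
The plan is to mimic the construction in Proposition~\ref{Th_coefficient}. We take a one-dimensional null space $\ker(A)=\operatorname{span}\{d\}$ and $A:=\bigl[-d_{\{2,\ldots,n\}}/d_1\ \ I_{n-1}\bigr]$, so that $A$ has full row rank and $\inf_{0\ne d'\in\ker(A)}\Phi(d')=\Phi(d)$ by scale invariance. We then arrange $\Phi(d)>c\sqrt s$ while some feasible point $x^*+td$ satisfies $\Phi(x^*+td)\le\Phi(x^*)$. The second requirement means $d\in\ker(A)\cap D_\Phi(x^*)$, so Theorem~\ref{Th_recovery} shows that $x^*$ is not the unique global minimizer. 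Since $\Phi(d)\le\sqrt{\|d\|_0}$ while $\Phi(x^*)\le\sqrt s$, the alternative $\Phi(d)<\Phi(x^*)$ in Lemma~\ref{Prop_descent} is useless. The descent must therefore come from one of the sets $E_i$, that is, from a step at which a coordinate of $x^*$ on $S$ vanishes. To avoid a degenerate kernel we also check $x^*\notin\operatorname{span}\{d\}$, which gives $Ax^*\ne0$.

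The choice of $d$ is guided by where the proof of Theorem~\ref{Th_uniform_global} loses: $\|d\|_1\le\|d_S\|_1+\|d_{S^c}\|_1\le\sqrt s\|d\|_2+\sqrt s\|d\|_2$. Near-equality requires three things. First, $d_S$ should be close to equal-height with $\|d_S\|_2\approx\|d\|_2$. Second, $d_{S^c}$ should consist of $N$ tiny equal entries $\varepsilon$ whose $\ell_1$ mass $N\varepsilon=m$ stays bounded while $\|d_{S^c}\|_2=\sqrt N\varepsilon\to0$. Third, the outer cone of Lemma~\ref{Prop_outer} should be nearly tight, which asks for $\operatorname{dist}(d,\operatorname{span}\{x^*\})\approx\|d\|_2$, so that $d_S$ is almost orthogonal to $x^*$. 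The plan is therefore to use a multi-block ansatz of the following form:
\[
 x^*=(a\mathbf 1_k,\ b\mathbf 1_{s-k},\ 0_N),\qquad
 d=(-a\mathbf 1_k,\ \beta\mathbf 1_{s-k},\ \varepsilon\mathbf 1_N),
\]
with the vanishing step $t=1$, so that $x^*+d=(0_k,(b+\beta)\mathbf 1_{s-k},\varepsilon\mathbf 1_N)$. For this ansatz we compute $\Phi(x^*)$, $\Phi(x^*+d)$ and $\Phi(d)$ in closed form as functions of $(k,a,b,\beta,m)$ and let $N\to\infty$ with $m=N\varepsilon$ fixed. In that limit $\|x^*+d\|_2$ and $\|d\|_2$ lose their $d_{S^c}$ contributions, so both the descent inequality and the ratio bound become finite-dimensional algebraic inequalities. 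We then let $s$ and $k/s$ vary, and the aim is to show that the supremum of $\Phi(d)/\sqrt s$ subject to $\Phi(x^*+d)\le\Phi(x^*)$ approaches $2$. Once parameters achieving $\Phi(d)>c\sqrt s$ with strict descent are fixed, we choose $N$ large, which makes all strict inequalities persist, and set $n=s+N$.

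The main obstacle is this parameter optimization, and preliminary checks show it is delicate. With equal heights, $x^*=-\mathbf 1_s$ and $x^*+d=(0,w)$, one only reaches $\Phi(d)\approx\sqrt{2s}$. Zeroing a single coordinate while keeping the others fixed also fails, because the bounded mass $m$ cannot be large enough. So the zeroed block and the surviving block must be balanced, and the surviving entries of $x^*+d$ on $S$ must be small relative to $d_S$. This makes $\Phi(x^*+d)$ be governed by an $\ell_1/\ell_2$ ratio that we can keep at most $\Phi(x^*)$. If the two-block ansatz cannot reach the constant $2$, we would next try a step $t\ne1$ with $x^*+td$ vanishing at a coordinate of the larger block. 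Failing that, we would use three blocks, taking the sign pattern of $d_S$ nearly orthogonal to $x^*$ in accordance with Lemma~\ref{Prop_outer}. In every variant we verify descent through the explicit $E_i$ test of Lemma~\ref{Prop_descent}.
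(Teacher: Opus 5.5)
There is a genuine gap: the proposition is an existence statement, and you never produce the example. Your framework is sound: a one-dimensional kernel, descent at a coordinate-vanishing step rather than through $\Phi(d)<\Phi(x^*)$, a long tail of tiny entries on $S^c$, and $d_S$ nearly equal-height and nearly orthogonal to $x^*$. Your $N\to\infty$ device is also legitimate, since finite $N$ only lowers $\Phi(x^*+d)$ while $\Phi(d)$ converges. But the proposal ends with an unresolved ``parameter optimization'' and a list of fallbacks, so nothing is proved.

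Worse, your heuristic for the optimization points the wrong way. In your limiting model with step $t=1$, normalize $a=1$. Let $k$ be the size of the zeroed block, and let $B,\gamma$ be the $\ell_2$-norms of $x^*$ and $d$ on the surviving block. Since $\Phi(x^*)-\sqrt{s-k}\le k/\sqrt{k+B^2}$, the descent condition forces $m\le k(B+\gamma)/\sqrt{k+B^2}$. Cauchy--Schwarz then gives $\Phi(d)=(k+\sqrt{s-k}\,\gamma+m)/\sqrt{k+\gamma^2}\le\sqrt s+\sqrt k$. So blocks of comparable size cap $\Phi(d)/\sqrt s$ at $1+\sqrt{k/s}<2$. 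Approaching $2$ forces the surviving block to be a vanishing fraction of $S$. The equality cases further force the surviving part of $x^*+d$ on $S$ to have $\ell_2$-norm much larger than $\norm{d_S}_2$. This is the opposite of making the surviving entries small relative to $d_S$.

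The missing idea is to keep a single large coordinate and zero all the others. In your notation, take block sizes $k$ and $1$, $a=1$, $b=k$, $\beta=0$, and tail mass $m=k-1$. The paper realizes this with $s=k+1$, $n=k^2+1$, $x^*=(k,\mathbf 1_k,0_{k(k-1)})$, $d=(0,-\mathbf 1_k,k^{-1}\mathbf 1_{k(k-1)})$, and $\ker(A)=\operatorname{span}\{d\}$.

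Then $\Phi(d)/\sqrt s=(2k-1)/\sqrt{(k+1)(k+1-1/k)}\to2$. Meanwhile $x^*+d=(k,0_k,k^{-1}\mathbf 1_{k(k-1)})$ gives $\Phi(x^*+d)<2-1/k<2k/\sqrt{k^2+k}=\Phi(x^*)$. The big coordinate does double duty. It keeps $\Phi(x^*)$ near $2$, and it makes $\norm{x^*+d}_2\approx k$ large enough to absorb tail $\ell_1$-mass $k-1$.

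Note also that the statement needs strict descent, because $x^*$ must fail to be a global minimizer. Your appeal to Theorem~\ref{Th_recovery} with a non-strict inequality only rules out uniqueness. Your finite-$N$ step does yield strictness when $m>0$, but you should say so explicitly.
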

\begin{proof}
Fix $0\le c<2$. Since
\[
 \frac{2k-1}{\sqrt{(k+1)(k+1-1/k)}}\longrightarrow2
 \qquad(k\to\infty),
\]
an integer $k\ge2$ can be chosen sufficiently large that
\begin{equation}\label{Eq_global_k_choice}
 \frac{2k-1}{\sqrt{(k+1)(k+1-1/k)}}>c.
\end{equation}
Set $s=k+1$ and $n=k^2+1$, and define
\[
 x^*=(k,\mathbf1_k,0_{k(k-1)}),\qquad
 d=(0,-\mathbf1_k,k^{-1}\mathbf1_{k(k-1)}),
\]
and
\[
 A=\begin{pmatrix}
 1&0&0&0\\
 0&-\mathbf1_{k-1}&I_{k-1}&0\\
 0&k^{-1}\mathbf1_{k(k-1)}&0&I_{k(k-1)}
 \end{pmatrix}\in\R^{(n-1)\times n}.
\]
Notice that $A$ has
full row rank and $Ad=0$. Hence,
$\ker(A)=\operatorname{span}\{d\}$.
Moreover, $\norm{x^*}_0=s$ and the first component of $Ax^*$ is
$k>0$, so $x^*\in\mathcal X_s(A)$.

Note that $\norm d_1=2k-1$ and
$\norm d_2^2=k+1-1/k$. By \eqref{Eq_global_k_choice}, it holds that
\[
 \inf_{0\ne h\in\ker(A)}\Phi(h)
 =\Phi(d)=\frac{2k-1}{\sqrt{k+1-1/k}}>c\sqrt{k+1}=c\sqrt{s}.
\]

Since $x^*+d\!=\!(k,0_k,k^{-1}\mathbf1_{k(k-1)})\!\in\! \Delta$ and
\[
 \Phi(x^*+d)=\frac{2k-1}{\sqrt{k^2+1-1/k}}
 <2-\frac1k<\frac{2k}{\sqrt{k^2+k}}=\Phi(x^*).
\]
The strict inequalities follow from $k^2+1-1/k>k^2$ and
$\sqrt{k^2+k}<k+1/2<2k/(2-1/k)$, respectively.
Thus $x^*$ is not a global minimizer of $\Phi$ on $\Delta$.
\end{proof}
The best coefficient for a fixed $s$ remains undetermined.
The next example shows that \eqref{Eq_global_certificate} can hold
when both \eqref{Eq_global_old} and \eqref{Eq_global_ratio} fail.

\begin{example}\label{Ex_global_certificate}
Let $s=2$ and $A=[I_6\ \ -\mathbf1_6]\in\R^{6\times7}$. Then
$\ker(A)=\operatorname{span}\{\mathbf1_7\}$.
Every nonzero $d\in\ker(A)$ has the form $d=a\mathbf1_7$ with
$a\ne0$, so $\norm d_2=\sqrt7|a|$. For every index set $T$ with
$|T|\le2$,
\[
 \norm{d_{T^c}}_1=(7-|T|)|a|\ge5|a|>
 \sqrt{14}|a|=\sqrt{s}\norm d_2.
\]
Thus \eqref{Eq_global_certificate} holds, and
Theorem~\ref{Th_uniform_global} guarantees uniform global exact
recovery on $\mathcal X_2(A)$.

However, for any $T$ with $|T|=2$,
\[
 \norm{d_{T^c}}_1-\norm{d_T}_1=3|a|
 <\sqrt{14}|a|=\sqrt{s}\norm d_2,
\]
so \eqref{Eq_global_old} fails. Moreover,
\[
 \inf_{0\ne d\in\ker(A)}\Phi(d)
 =\sqrt7<2\sqrt2=2\sqrt{s},
\]
so \eqref{Eq_global_ratio} also fails. Hence neither condition is
necessary for uniform global exact recovery.
\end{example}

Uniform global exact recovery implies uniform local exact recovery,
but the converse fails, as shown by Example~\ref{Ex_uniform_local_global}.
By Proposition~\ref{Th_local}, each globally recovered signal also
has first-order growth and tilt stability. The corresponding growth
modulus and localization radius may depend on the signal.

\section*{Conclusions}

The variational geometric analysis establishes necessary and
sufficient conditions for local and global exact recovery in the
affine-constrained ratio model with nonzero data. The local
characterizations show that every local minimizer is sharp and
tilt stable. In particular, it remains the unique minimizer in a
sufficiently small feasible neighborhood under sufficiently small
linear objective perturbations. Global recovery requires additional
comparisons, captured by the global descent cone. The examples
demonstrate that even uniform local exact recovery does not ensure
uniform global exact recovery.

The pointwise analysis also yields improved sufficient null-space
conditions for uniform recovery. The coefficient optimality results
identify the limits of the considered scalar conditions, for each
fixed sparsity order in the local case and over all sparsity orders
and dimensions in the global case. A positive margin in the local
condition further provides quantitative bounds on growth,
localization, and admissible linear objective perturbations.
Together, these results connect recovery at a fixed signal with
uniform guarantees, while distinguishing sufficient scalar bounds
from exact recovery characterizations.



\end{document}